\providecommand{\U}[1]{\protect\rule{.1in}{.1in}}
\newtheorem{theorem}{Theorem}
\newtheorem{corollary}[theorem]{Corollary}
\newtheorem{example}[theorem]{Example}
\newtheorem{lemma}[theorem]{Lemma}
\newenvironment{proof}[1][Proof]{\noindent\textbf{#1.} }{\ \rule{0.5em}{0.5em}}
\begin{document}

\title{Efficient vectors for block perturbed consistent matrices}
\author{Susana Furtado\thanks{Email: sbf@fep.up.pt. The work of this author was
supported by FCT- Funda\c{c}\~{a}o para a Ci\^{e}ncia e Tecnologia, under
project UIDB/04721/2020.} \thanks{Corresponding author.}\\CEAFEL and Faculdade de Economia \\Universidade do Porto\\Rua Dr. Roberto Frias\\4200-464 Porto, Portugal
\and Charles R. Johnson \thanks{Email: crjohn@wm.edu. The work of this author was
supported in part by National Science Foundation grant DMS-0751964.}\\Department of Mathematics\\College of William and Mary\\Williamsburg, VA 23187-8795.}
\maketitle

\begin{abstract}
In prioritization schemes, based on pairwise comparisons, such as the
Analytical Hierarchy Process, it is important to extract a cardinal ranking
vector from a reciprocal matrix that is unlikely to be consistent. It is
natural to choose such a vector only from efficient ones. Recently a method to
generate inductively all efficient vectors for any reciprocal matrix has been
discovered. Here we focus upon the study of efficient vectors for a reciprocal
matrix that is a block perturbation of a consistent matrix in the sense that
it is obtained from a consistent matrix by modifying entries only in a proper
principal submatrix. We determine an explicit class of efficient vectors for
such matrices. Based upon this, we give a description of all the efficient
vectors in the 3-by-3 block perturbed case. In addition, we give sufficient
conditions for the right Perron eigenvector of such matrices to be efficient
and provide examples in which efficiency does not occur. Also, we consider a
certain type of constant block perturbed consistent matrices, for which we may
construct a class of efficient vectors, and demonstrate the efficiency of the
Perron eigenvector. Appropriate examples are provided throughout.

\end{abstract}

\textbf{Keywords}: consistent matrix, decision analysis, efficient vector,
Perron eigenvector, reciprocal matrix, strongly connected digraph.

\textbf{MSC2020}: 90B50, 91B06, 05C20, 15B48, 15A18

\bigskip

\section{Introduction\label{secintr}}

\bigskip In the Analytic Hierarchy process, a method introduced by Saaty
\cite{saaty1977} and used in decision analysis, reciprocal matrices, also
called pairwise comparison matrices, appear in the context of independent,
pairwise ratio comparisons among $n$ alternatives. An $n$-by-$n$ entry-wise
positive matrix $A=\left[  a_{ij}\right]  $ is called \emph{reciprocal} if,
for all $1\leq i,j\leq n$, $a_{ji}=\frac{1}{a_{ij}}.$ We denote by
$\mathcal{PC}_{n}$ the set of all such matrices. Matrix $A$ is said to be
\emph{consistent} if $a_{ij}a_{jk}=a_{ik}$ for all $i,j,k.$ This is the case
if and only if there is a positive vector $w=\left[
\begin{array}
[c]{ccc}%
w_{1} & \ldots & w_{n}%
\end{array}
\right]  ^{T}$ such that $a_{ij}=\frac{w_{i}}{w_{j}}$ for all $i,j.$ Such
vector is unique up to a factor of scale and ranks the different alternatives.
Any matrix in $\mathcal{PC}_{2}$ is consistent. When $n>2,$ consistency of the
ratio comparisons is unlikely. Thus, a cardinal ranking vector should be
obtained from a reciprocal matrix \cite{choo, dij, golany} and it is natural
to choose it from among efficient ones.

A positive vector $w$ is called \emph{efficient} for $A$ \cite{blanq2006} if,
for every other positive vector $v=\left[
\begin{array}
[c]{ccc}%
v_{1} & \ldots & v_{n}%
\end{array}
\right]  ^{T},$
\[
\left\vert A-\left[  \frac{v_{i}}{v_{j}}\right]  \right\vert \leq\left\vert
A-\left[  \frac{w_{i}}{w_{j}}\right]  \right\vert
\]
(entry-wise) implies $v=w,$ i.e. no other consistent matrix approximating $A$
is clearly better than the one associated with $w$ (Pareto optimality). We
denote the set of all efficient vectors for $A\ $ by $\mathcal{E}(A).$
Clearly, any positive multiple of an efficient vector for $A$ is still efficient.

\bigskip

The efficient vectors for a consistent matrix are the positive multiples of
any of its columns. When $A$ is not consistent, there are infinitely many
(non-proportional) efficient vectors for $A.$ It is known that a vector $w$ is
efficient for $A$ if and only if a certain directed graph (digraph) $G(A,w),$
constructed from $A$ and $w,$ is strongly connected \cite{blanq2006}. Many
ways of constructing efficient vectors for $A$ have been proposed. For
example, the (Hadamard) geometric mean of all the columns of a reciprocal
matrix is efficient \cite{blanq2006}. More recently, it has been proved that
the geometric mean of any subset of the columns is efficient \cite{FJ1}. The
classical proposal for the ranking vector obtained from a reciprocal matrix is
its (right) Perron eigenvector \cite{saaty1977, Saaty}. It is known that this
vector may not be efficient \cite{blanq2006, bozoki2014}. Classes of matrices
for which the Perron eigenvector is efficient have been identified \cite{p6,
p2, FerFur}. Generalizing these results, all the efficient vectors in some of
these classes have been described. More precisely, in \cite{CFF} the
description of the efficient vectors for a matrix obtained from a consistent
matrix by perturbing one entry above the main diagonal and the corresponding
reciprocal entry has been presented. These matrices were called in \cite{p6},
where the efficiency of the Perron eigenvector was shown, simple perturbed
consistent matrices. Later, in \cite{Fu22}, the characterization of the
efficient vectors of a double perturbed consistent matrix, that is, a
reciprocal matrix obtained from a consistent one by modifying at most two
entries above de main diagonal, and the corresponding reciprocal entries, was
presented, generalizing the result that the Perron eigenvector is efficient
\cite{p2}. In \cite{FerFur} sufficient conditions for the efficiency of the
Perron eigenvector of a reciprocal matrix obtained from a consistent one by
perturbing entries located in a $2$-by-$2$ submatrix not containing a diagonal
entry, were given. Several other aspects of efficiency have been studied (see,
for example, \cite{anh, baj, european}). Recently a method to generate
inductively all efficient vectors for a reciprocal matrix was provided
\cite{FJ2}.

\bigskip

In this paper we focus on reciprocal matrices $A\in\mathcal{PC}_{n}$ obtained
from a consistent matrix by modifying entries in an $s$-by-$s$ principal
submatrix, $s<n$. We call such matrices \emph{s-block perturbed consistent
matrices}. We give a class of efficient vectors for such matrices and use this
result to give an explicit description of the complete set of efficient
vectors when $s=3$. This description uses the characterization of the
efficient vectors for an arbitrary matrix in $\mathcal{PC}_{4}$ given in
\cite{FJ2}. We also study the efficiency of the Perron eigenvector of these
matrices. We finally consider block perturbed consistent matrices in which the
perturbed block has a constant pattern, give a class of efficient vectors for
such matrices and prove the efficiency of the eigenvector.

\medskip

Note that the $2$-block perturbed consistent matrices are precisely the simple
perturbed matrices mentioned above. The class of $3$-block perturbed
consistent matrices contains a certain type of double perturbed consistent matrices.

\bigskip

\bigskip

The paper is organized as follows. In the next section we introduce some
notation and give some useful background that will be important throughout.
Then, in Section \ref{s3}, we give a class of efficient vectors for s-block
perturbed consistent matrices. This class is formed by all the efficient
vectors for which the appropriate subvector is efficient for the perturbed
block. Based on this result, in Section \ref{s5} we give a new short proof of
the characterization of the efficient vectors for a $2$-block perturbed
consistent matrix. In Section \ref{s6} we give an explicit description of the
efficient vectors for an arbitrary $3$-block perturbed consistent matrix and
provide sufficient conditions for the efficiency of the Perron eigenvector.
Several examples in which the Perron eigenvector is not efficient are
presented. In Section \ref{s7} we focus on block perturbed consistent matrices
whose perturbed block has constant entries above (below) the main diagonal. We
construct a class of efficient vectors for such matrices and, in addition,
show that the Perron eigenvector is efficient. We conclude the paper with some
observations in Section \ref{s8}.

\section{Background}

In this section we introduce some notation and give some known results that
will be helpful in the paper.

We denote by $\mathcal{V}_{n}$ the set of positive $n$-vectors. For an
$n$-by-$n$ matrix $A=[a_{ij}],$ the principal submatrix of $A$ determined by
deleting (by retaining) the rows and columns indexed by a subset
$K\subseteq\{1,\ldots,n\}$ is denoted by $A(K)$ $(A[K]);$ we abbreviate
$A(\{i\})$ as $A(i).$ Similarly, if $w\in\mathcal{V}_{n},$ we denote by $w(K)$
($w[K]$) the vector obtained from $w$ by deleting (by retaining) the entries
indexed by $K$ and abbreviate $w(\{i\})$ as $w(i)$. Note that, if $A$ is
reciprocal (consistent) then so are $A(K)$ and $A[K].$

We denote by $J_{m,n}$ the $m$-by-$n$ matrix with all entries equal to $1.$
When $m=n$ we simply write $J_{n}.$ We denote by $I_{n}$ the identity matrix
of order $n.$ In all cases, when the size of the matrix is clear from the
context, we may omit the subscript.

\bigskip\ 

The next result (see, for example, \textrm{\cite{Fu22}) }concerns how
$\mathcal{E}(A)$ changes when $A$ is subjected to either a positive diagonal
similarity or a permutation similarity, or both (a monomial similarity) and
allow us to simplify the proofs of our results.

\begin{lemma}
\label{lsim} Suppose that $A\in\mathcal{PC}_{n}$ and $w\in\mathcal{E}(A).$ If
$D$ is a positive diagonal matrix ($P$ is a permutation matrix), then
$DAD^{-1}\in\mathcal{PC}_{n}$ and $Dw\in\mathcal{E}(DAD^{-1})$ ($PAP^{T}%
\in\mathcal{PC}_{n}$ and $Pw\in\mathcal{E}(PAP^{T})$).
\end{lemma}

\bigskip In \cite{blanq2006} the authors proved a useful result that gives a
characterization of efficiency in terms of a certain digraph. Given
$A\in\mathcal{PC}_{n}$ and a positive vector
\begin{equation}
w=\left[
\begin{array}
[c]{ccc}%
w_{1} & \cdots & w_{n}%
\end{array}
\right]  ^{T},\label{ww}%
\end{equation}
define $G(A,w)$ the digraph with vertex set $\{1,\ldots,n\}$ and a directed
edge $i\rightarrow j,$ $i\neq j,$ if and only if, $\frac{w_{i}}{w_{j}}\geq
a_{ij}$. Notice that there is one edge between any two distinct vertices in at
least one direction. Also observe that, for $K\subseteq\{1,\ldots,n\},$
$G(A[K],w[K])$ is the subgraph of $G(A,w)$ induced by the vertices in $K.$

\begin{theorem}
\textrm{\cite{blanq2006}}\label{blanq} Let $A\in\mathcal{PC}_{n}$ and
$w\in\mathcal{V}_{n}.$ The vector $w$ is efficient for $A$ if and only if
$G(A,w)$ is a strongly connected digraph, that is, for all pairs of vertices
$i,j,$ with $i\neq j,$ there is a directed path from $i$ to $j$ in $G(A,w)$.
\end{theorem}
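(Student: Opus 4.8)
The plan is to prove the two implications separately, in each case converting the entrywise comparison of approximation errors into order relations that can be read directly off $G(A,w)$. The unifying device is to pass from the vectors $v,w$ to the ratios $t_i := v_i/w_i$: an edge of the digraph turns out to control the order of two such ratios, and strong connectivity then propagates these orders globally.

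For the ``if'' direction (strong connectivity $\Rightarrow$ efficiency), I would take any $v\in\mathcal{V}_n$ with $\left|A-\left[\frac{v_i}{v_j}\right]\right|\leq\left|A-\left[\frac{w_i}{w_j}\right]\right|$ entrywise and show $v$ is a scalar multiple of $w$. The key local observation is that a single edge fixes the order of two ratios: if $i\rightarrow j$ is an edge, so that $\frac{w_i}{w_j}\geq a_{ij}$, then $\left|a_{ij}-\frac{w_i}{w_j}\right|=\frac{w_i}{w_j}-a_{ij}$, and the lower half of the dominating inequality, $a_{ij}-\frac{v_i}{v_j}\geq a_{ij}-\frac{w_i}{w_j}$, rearranges to $\frac{v_i}{v_j}\leq\frac{w_i}{w_j}$, i.e. $t_i\leq t_j$. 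Hence $t$ is nondecreasing along every directed edge, and therefore along every directed path. Strong connectivity supplies, for any two indices $p,q$, directed paths in both directions, forcing $t_p=t_q$; so all $t_i$ coincide, $v=cw$, and $\left[\frac{v_i}{v_j}\right]=\left[\frac{w_i}{w_j}\right]$. This is efficiency, up to the scaling that the definition (together with the remark that positive multiples of efficient vectors are efficient) identifies.

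For the ``only if'' direction I would argue the contrapositive, exhibiting a strict improvement of $w$ whenever $G(A,w)$ fails to be strongly connected. Passing to the condensation of $G(A,w)$ into strongly connected components, which is an acyclic digraph on at least two vertices, I would pick a source component and let $S$ be its vertex set, a proper nonempty subset of $\{1,\ldots,n\}$ with no edge entering $S$ from $S^c$. Since at least one edge joins any two distinct vertices (as noted just before Theorem \ref{blanq}), ``no edge $i\rightarrow j$ with $i\in S^c,\ j\in S$'' means the strict inequality $\frac{w_i}{w_j}<a_{ij}$ for all such pairs, and reciprocally $\frac{w_i}{w_j}>a_{ij}$ for $i\in S,\ j\in S^c$. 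I then define $v_j=(1-\epsilon)w_j$ for $j\in S$ and $v_i=w_i$ for $i\in S^c$.

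The remaining work is an entry-by-entry verification, and this is the step most likely to conceal an error. Within $S$ and within $S^c$ the ratios $\frac{v_i}{v_j}$ equal $\frac{w_i}{w_j}$, so those errors are unchanged; for the mixed pairs the perturbed ratio moves strictly toward the target $a_{ij}$ (downward from above when $i\in S,\ j\in S^c$, upward from below when $i\in S^c,\ j\in S$). The two points requiring care are that the strictness of the cross-cut inequalities is what guarantees a genuine rather than merely weak decrease, and that $\epsilon>0$ must be chosen uniformly small over the finitely many mixed pairs so that no decreasing error overshoots its target and begins to increase. With such $\epsilon$, one obtains $\left|A-\left[\frac{v_i}{v_j}\right]\right|\leq\left|A-\left[\frac{w_i}{w_j}\right]\right|$ with strict inequality on the cross-cut entries, while $v$ is not a multiple of $w$ because $S$ is proper and nonempty; hence $w$ is not efficient. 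The conceptual heart is the choice of a source component: it is precisely what produces a cut across which all crossing inequalities point the same way, so that a single one-parameter rescaling of the coordinates in $S$ improves every affected entry simultaneously.
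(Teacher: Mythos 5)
Your proof is correct, but there is nothing in the paper to compare it against: Theorem \ref{blanq} is stated as a citation of \cite{blanq2006}, and the paper itself gives no proof (it only uses the result). Your argument is the standard one from the literature, and both halves are sound. In the ``if'' direction, the observation that an edge $i\rightarrow j$ forces $v_i/v_j\leq w_i/w_j$, hence $t_i\leq t_j$ for $t_k=v_k/w_k$, combined with directed paths in both directions between any two vertices, correctly yields that any dominating $v$ is proportional to $w$, so the associated consistent matrices coincide. In the ``only if'' direction, taking a source strongly connected component $S$ of the condensation, noting that the absence of edges from $S^{c}$ into $S$ gives the strict inequalities $w_i/w_j<a_{ij}$ ($i\in S^{c}$, $j\in S$) and $w_i/w_j>a_{ij}$ ($i\in S$, $j\in S^{c}$), and rescaling the coordinates in $S$ by $1-\epsilon$ with $\epsilon>0$ uniformly small over the finitely many cross pairs, produces a vector that strictly improves every cross entry while leaving all other entries unchanged; this is exactly the right construction, and you correctly flagged the two delicate points (strictness across the cut, and the uniform choice of $\epsilon$ to avoid overshooting). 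One further point you handled well: the paper's literal definition of efficiency (domination implies $v=w$) is defective, since any positive multiple of $w$ dominates weakly without equaling $w$; your reading of efficiency modulo proportionality, i.e.\ domination forces $[v_i/v_j]=[w_i/w_j]$ and a strict improvement certifies inefficiency, is the intended (Pareto) definition of \cite{blanq2006}, and both directions of your proof are valid under it.
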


\bigskip

Any $2$-block perturbed consistent matrix $A$ is monomial similar to a
reciprocal matrix $S(x)$ with all entries equal to $1$ except those in
positions $(1,2)$ and $(2,1),$ which are $x$ and $\frac{1}{x},$ respectively,
for some $x>0$. Lemma \ref{lsim} explains how to obtain the efficient vectors
for $A$ from those for $S(x).$

\begin{theorem}
\cite{CFF}\label{tmain} Let $n\geq3,$ $x>0$ and
\[
S(x)=\left[
\begin{tabular}
[c]{l|l}%
$%
\begin{array}
[c]{cc}%
1 & x\\
\frac{1}{x} & 1
\end{array}
$ & $J_{2,n-2}$\\\hline
$J_{n-2,2}$ & $J_{n-2}$%
\end{tabular}
\ \ \right]  \in\mathcal{PC}_{n}.
\]
Then, a vector $w\in\mathcal{V}_{n}$ as in (\ref{ww}) is efficient for $S(x)$
if and only if%
\begin{equation}
w_{2}\leq w_{3},\ldots,w_{n}\leq w_{1}\leq xw_{2}\text{ }\label{f1}%
\end{equation}
or%
\begin{equation}
w_{2}\geq w_{3},\ldots,w_{n}\geq w_{1}\geq xw_{2}\text{. }\label{f2}%
\end{equation}

\end{theorem}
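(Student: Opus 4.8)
The plan is to use the digraph characterization of efficiency from Theorem~\ref{blanq}: a vector $w$ is efficient for $S(x)$ if and only if $G(S(x),w)$ is strongly connected. So the strategy is to translate the edge conditions of $G(S(x),w)$ into inequalities on the entries $w_1,\ldots,w_n$, and then argue that strong connectivity holds precisely when (\ref{f1}) or (\ref{f2}) holds. First I would record the edges. Since every entry of $S(x)$ equals $1$ except $a_{12}=x$ and $a_{21}=1/x$, for any pair $\{i,j\}$ not equal to $\{1,2\}$ we get the edge $i\rightarrow j$ exactly when $w_i\geq w_j$ (and $j\rightarrow i$ when $w_j\geq w_i$, with both edges present on ties). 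For the pair $\{1,2\}$ the edge $1\rightarrow 2$ is present iff $w_1/w_2\geq x$, and $2\rightarrow 1$ iff $w_2/w_1\geq 1/x$, i.e. iff $w_1\leq xw_2$.

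The key structural observation is that among the vertices $\{3,\ldots,n\}$ together with whichever of $1,2$ behaves ``like a $1$-entry,'' the edge directions are governed purely by the numerical order of the $w_i$'s. Thus I would first analyze the ``free'' vertices $3,\ldots,n$: between any two of them, and between each of them and vertex $2$ (since $a_{2j}=a_{j2}=1$ for $j\geq 3$) and between each of them and vertex $1$ (since $a_{1j}=a_{j1}=1$ for $j\geq 3$), the edges follow $\geq$ on the coordinates. The only ``special'' pair is $\{1,2\}$, where the threshold $x$ (rather than $1$) intervenes. Strong connectivity then forces a cyclic structure, and the main point is that a strongly connected tournament-like digraph on these vertices essentially forces the $w_i$ to be (weakly) monotone, with $w_1$ and $w_2$ at the two extremes and vertex $2$ positioned so that the edge between $1$ and $2$ can ``close the loop.''

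Concretely, I would argue the two directions. For sufficiency, assume (\ref{f1}): then $w_2\leq w_j\leq w_1$ for all $j\in\{3,\ldots,n\}$ and $w_1\leq xw_2$. The inequalities $w_j\leq w_1$ give edges $1\rightarrow j$, the inequalities $w_2\leq w_j$ give edges $j\rightarrow 2$, and $w_1\leq xw_2$ gives the edge $2\rightarrow 1$. This produces, for every $j\geq 3$, a directed cycle $1\rightarrow j\rightarrow 2\rightarrow 1$, and since all these cycles share the edge $2\rightarrow 1$ and pass through both $1$ and $2$, the whole digraph is strongly connected; symmetrically, (\ref{f2}) reverses all orientations and also yields strong connectivity. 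For necessity, assume $w$ is efficient, so $G(S(x),w)$ is strongly connected. I would show that the coordinate $w_1$ must be an extreme (largest or smallest) among $w_1,\ldots,w_n$: if some $w_j$ ($j\geq 3$) strictly exceeded $w_1$ while some other coordinate fell below, the only nonstandard edge is the $\{1,2\}$ edge, so the graph would split into a source/sink structure that cannot be strongly connected. Distinguishing the case $w_1$ maximal (leading to (\ref{f1})) from $w_1$ minimal (leading to (\ref{f2})), and checking that the $\{1,2\}$ edge must then point the right way to close the cycle, yields exactly the stated inequalities.

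The main obstacle I anticipate is the necessity direction, specifically handling the interaction between the special threshold $x$ on the $\{1,2\}$ edge and the plain ordering on all other edges: one must rule out ``intermediate'' configurations where $w_1$ is neither the maximum nor the minimum, and carefully verify that in each surviving case the edge between $1$ and $2$ is forced to the orientation demanded by (\ref{f1}) or (\ref{f2}). This amounts to a short but careful case analysis of which vertices can reach, and be reached from, vertices $1$ and $2$ given that only the $\{1,2\}$ edge deviates from the natural coordinate order.
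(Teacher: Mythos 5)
Your proposal is correct, but it follows a genuinely different route from the paper. The paper proves Theorem \ref{tmain} as a showcase for its general machinery: the case $n=3$ is settled by direct inspection of the possible strongly connected digraphs, and for $n>3$ sufficiency follows from Theorem \ref{lcompl} (once $w[\{1,2,3\}]$ is known to be efficient for $A[\{1,2,3\}]$, efficiency of $w$ is equivalent to the remaining entries lying between $\min\{w_1,w_2,w_3\}$ and $\max\{w_1,w_2,w_3\}$), while necessity invokes Theorem \ref{cunion} (a nontrivial result from \cite{FJ2}) to extract an efficient $3$-subvector, which either corresponds to an all-ones principal submatrix (forcing $w$ constant) or contains indices $1$ and $2$, after which the $3$-by-$3$ case and Theorem \ref{lcompl} finish the argument. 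You instead argue directly on $G(S(x),w)$ via Theorem \ref{blanq}, uniformly in $n$: your sufficiency argument (the cycles $1\rightarrow j\rightarrow 2\rightarrow 1$ sharing the edge $2\rightarrow 1$) is complete and correct, and your necessity plan is sound --- the reachability analysis does show that the maximum and minimum of $w$ must be attained at vertices $1$ and $2$ (any extremal set of vertices contained in $\{3,\ldots,n\}$ has no incoming, respectively no outgoing, edges), and the orientation of the $\{1,2\}$ edge is then forced (e.g.\ if $w_1$ is the maximum, $w_2$ the strict minimum, and $w_1>xw_2$, the set of minimal vertices has no outgoing edges). When writing the necessity case analysis out, take care with the tie cases: all entries equal, and $w_1=xw_2$ where both orientations of the $\{1,2\}$ edge are present. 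As for what each approach buys: the paper's proof is very short and demonstrates that its block-perturbation theorems subsume the earlier ad hoc analysis, but it rests on the machinery of Theorems \ref{lcompl} and \ref{cunion} (hence on \cite{FJ2}); yours is self-contained and elementary, needing only the digraph characterization of \cite{blanq2006}, at the cost of redoing by hand the graph case analysis that the paper delegates to its general results and to the $n=3$ base case.
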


\bigskip Any matrix $A\in\mathcal{PC}_{3}$ is a simple perturbed consistent
matrix. Thus, from Lemma \ref{lsim} and Theorem \ref{tmain}, we obtain the
characterization of the efficient vectors for $A\in\mathcal{PC}_{3}$.

\begin{corollary}
\label{c3por3}Let
\[
\left[
\begin{array}
[c]{ccc}%
1 & a_{12} & a_{13}\\
\frac{1}{a_{12}} & 1 & a_{23}\\
\frac{1}{a_{13}} & \frac{1}{a_{23}} & 1
\end{array}
\right]  .
\]
Then, $w=\left[
\begin{array}
[c]{ccc}%
w_{1} & w_{2} & w_{3}%
\end{array}
\right]  ^{T}\in\mathcal{V}_{3}$ is efficient for $A$ if and only if%
\[
a_{23}w_{3}\leq w_{2}\leq\frac{w_{1}}{a_{12}}\leq\frac{a_{13}}{a_{12}}%
w_{3}\text{\qquad or \qquad}a_{23}w_{3}\geq w_{2}\geq\frac{w_{1}}{a_{12}}%
\geq\frac{a_{13}}{a_{12}}w_{3}.
\]

\end{corollary}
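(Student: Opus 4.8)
The plan is to reduce the statement for an arbitrary $A \in \mathcal{PC}_3$ to the normalized matrix $S(x)$ of Theorem \ref{tmain} via Lemma \ref{lsim}, and then translate the inequalities \eqref{f1} and \eqref{f2} back to the original entries. First I would apply a positive diagonal similarity $D = \mathrm{diag}(d_1, d_2, d_3)$ to $A$ so that the off-diagonal entries in positions $(1,3)$, $(2,3)$ (and their reciprocals) become $1$; concretely, since $A(\{1,2\})$ and the remaining consistent part can be scaled, I would choose $D$ to clear as many entries to $1$ as possible, leaving a single perturbed entry in the $(1,2)$ position. The key observation is that $A \in \mathcal{PC}_3$ has essentially one degree of freedom relative to a consistent matrix, so after an appropriate monomial similarity it takes the form $S(x)$ for a suitable $x > 0$, with $n = 3$.

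The main computational step is identifying the correct diagonal scaling and the resulting value of $x$. I would set $D = \mathrm{diag}(a_{13}, a_{23}, 1)$ (or a scalar multiple), so that conjugation sends $a_{13} \mapsto 1$ and $a_{23} \mapsto 1$; tracking the $(1,2)$ entry under this conjugation gives $x = \frac{a_{13}}{a_{12} a_{23}}$, which measures the deviation from consistency (note $x = 1$ exactly when $A$ is consistent). By Lemma \ref{lsim}, $w \in \mathcal{E}(A)$ if and only if $Dw \in \mathcal{E}(S(x))$. Writing $u = Dw = [\,a_{13} w_1,\ a_{23} w_2,\ w_3\,]^T$, I would substitute these coordinates into the characterization \eqref{f1}--\eqref{f2} for $S(x)$ with $n = 3$: the condition $u_2 \le u_3 \le u_1 \le x u_2$ becomes $a_{23} w_2 \le w_3 \le a_{13} w_1 \le \frac{a_{13}}{a_{12} a_{23}} a_{23} w_2$, and I would then divide through by appropriate factors to match the stated chain.

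The routine but careful part is verifying that after the substitution and clearing of common factors the inequality chain rearranges exactly into $a_{23} w_3 \le w_2 \le \frac{w_1}{a_{12}} \le \frac{a_{13}}{a_{12}} w_3$ (and dually for the reversed chain). I expect the main obstacle to be bookkeeping: the ordering of the indices in \eqref{f1} places the perturbed pair in positions $1,2$ and the consistent coordinate in position $3$, so I must confirm that the diagonal scaling I chose aligns the perturbed block with positions $(1,2)$ of $S(x)$ and that no permutation similarity is needed. If the natural scaling instead requires permuting indices, I would compose $D$ with a permutation matrix $P$ and invoke the permutation part of Lemma \ref{lsim}, adjusting $x$ accordingly; this is where a sign or reciprocal error is easiest to introduce, so I would double-check by testing the consistent case $x = 1$, where all three chained quantities collapse to equality and both conditions reduce to $w$ being a positive multiple of a column of $A$, as expected.
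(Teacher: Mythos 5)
Your overall strategy is exactly the paper's: every $A\in\mathcal{PC}_3$ is a simple (2-block) perturbed consistent matrix, hence diagonally similar to $S(x)$, and Lemma \ref{lsim} transfers the characterization of Theorem \ref{tmain} (with $n=3$) back to $A$. However, the concrete similarity you wrote down is wrong, and the error propagates so that your final chain does not ``rearrange exactly'' into the stated one. With $D=\mathrm{diag}(a_{13},a_{23},1)$ you get $(DAD^{-1})_{13}=d_1a_{13}/d_3=a_{13}^2$ and $(DAD^{-1})_{23}=a_{23}^2$, so $DAD^{-1}$ is not of the form $S(x)$ at all: you have conflated $DAD^{-1}$ with $D^{-1}AD$. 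The scaling that works is $D=\mathrm{diag}(1/a_{13},\,1/a_{23},\,1)$, for which $DAD^{-1}=S(x)$ with $x=\frac{a_{12}a_{23}}{a_{13}}$ --- the reciprocal of your $x$ --- and Lemma \ref{lsim} then says $w\in\mathcal{E}(A)$ if and only if $Dw=\left[\,w_1/a_{13},\ w_2/a_{23},\ w_3\,\right]^T\in\mathcal{E}(S(x))$, the entrywise reciprocal of your $u$. (Note that testing ``$x=1$ exactly when $A$ is consistent'' cannot catch this, since $x$ and $1/x$ both have that property.)

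These two inversions do not cancel. Substituting your $u$ and your $x$ into (\ref{f1}) gives $a_{23}w_2\le w_3\le a_{13}w_1\le\frac{a_{13}}{a_{12}}w_2$, i.e.\ the conditions $a_{23}w_2\le w_3$, $w_3\le a_{13}w_1$, $a_{12}w_1\le w_2$, each of which compares a ratio $w_i/w_j$ against the \emph{reciprocal} of the entry the corollary compares it to; this is not a rearrangement of $a_{23}w_3\le w_2\le\frac{w_1}{a_{12}}\le\frac{a_{13}}{a_{12}}w_3$. Concrete failure: take $a_{12}=2$, $a_{13}=a_{23}=1$ (so $A=S(2)$) and $w=\left[\,2,\ 1,\ 3/2\,\right]^T$. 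Theorem \ref{tmain} gives $w\in\mathcal{E}(A)$ since $w_2\le w_3\le w_1\le 2w_2$, but your formulas (here $D=I_3$, $x=1/2$) declare $w$ inefficient, as $2\le\frac12\cdot 1$ fails and the reversed chain fails at $1\ge 3/2$. Your own proposed sanity check would also have exposed this: in the consistent case your chains collapse to $a_{23}w_2=w_3=a_{13}w_1$, which makes $w$ proportional to the entrywise reciprocal of a column (i.e.\ a transposed row), not to a column. With the corrected $D$, $x$ and $u$, condition (\ref{f1}) becomes the second (``$\ge$'') chain of the corollary and (\ref{f2}) the first, no permutation similarity is needed, and the proof goes through as the paper intends.
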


\bigskip

The next result, presented in \cite{FJ1} (see also \cite{CFF}), gives
necessary and sufficient conditions for an efficient vector for a matrix
$A\in\mathcal{PC}_{n}$ to be an extension of an efficient vector for an
$(n-1)$-by-$(n-1)$ principal submatrix of $A.$ It will be often used in this paper.

\begin{theorem}
\cite{FJ1}\label{thext} Let $A\in\mathcal{PC}_{n},$ $w\in\mathcal{V}_{n}$ and
$k\in\{1,\ldots,n\}$. Suppose that $w(k)\in\mathcal{E}(A(k)).$ Then,
$w\in\mathcal{E}(A)$ if and only if
\[
\min_{1\leq i\leq n,\text{ }i\neq k}\frac{w_{i}}{a_{ik}}\leq w_{k}\leq
\max_{1\leq i\leq n,\text{ }i\neq k}\frac{w_{i}}{a_{ik}}.
\]

\end{theorem}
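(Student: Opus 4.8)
The plan is to recast the statement entirely in the language of the digraph characterization of Theorem \ref{blanq} and then prove a short graph-theoretic lemma. First I would observe that $A(k)$ and $w(k)$ are exactly $A[K]$ and $w[K]$ for $K=\{1,\ldots,n\}\setminus\{k\}$, so by the remark preceding Theorem \ref{blanq} the digraph $G(A(k),w(k))$ is precisely the subgraph of $G(A,w)$ induced on the vertices different from $k$. By Theorem \ref{blanq}, the hypothesis $w(k)\in\mathcal{E}(A(k))$ says that this induced subgraph is strongly connected, while the conclusion $w\in\mathcal{E}(A)$ is equivalent to $G(A,w)$ being strongly connected. Thus everything reduces to a question about adjoining one vertex to an already strongly connected digraph.

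Next I would translate the edge conditions at the vertex $k$ into the inequalities appearing in the statement. Using the definition of $G(A,w)$ together with reciprocity $a_{ki}=\frac{1}{a_{ik}}$, the edge $k\rightarrow i$ is present exactly when $\frac{w_k}{w_i}\geq a_{ki}$, i.e. $w_k\geq\frac{w_i}{a_{ik}}$, whereas the edge $i\rightarrow k$ is present exactly when $\frac{w_i}{w_k}\geq a_{ik}$, i.e. $w_k\leq\frac{w_i}{a_{ik}}$. Consequently $\min_{i\neq k}\frac{w_i}{a_{ik}}\leq w_k$ holds if and only if $k$ has at least one out-edge in $G(A,w)$, and $w_k\leq\max_{i\neq k}\frac{w_i}{a_{ik}}$ holds if and only if $k$ has at least one in-edge. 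So the displayed double inequality is exactly the assertion that $k$ possesses both an out-neighbour and an in-neighbour in $G(A,w)$.

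It then remains to establish the following graph lemma: if the subgraph of $G(A,w)$ induced on $\{1,\ldots,n\}\setminus\{k\}$ is strongly connected, then $G(A,w)$ is strongly connected if and only if $k$ has at least one in-edge and at least one out-edge. The forward implication is immediate, since in any strongly connected digraph every vertex must both reach and be reachable from the others. For the converse I would fix an in-edge $j\rightarrow k$ and an out-edge $k\rightarrow\ell$ (with $j,\ell\neq k$) and check reachability for an arbitrary ordered pair $p,q$: if $p,q\neq k$ a directed path exists inside the induced subgraph; if $p=k$ follow $k\rightarrow\ell$ and then a path from $\ell$ to $q$ inside the induced subgraph; and if $q=k$ take a path from $p$ to $j$ inside the induced subgraph followed by $j\rightarrow k$. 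This produces a directed $p$-to-$q$ path in every case, so $G(A,w)$ is strongly connected, completing the equivalence.

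The argument is short, and the only real care needed is bookkeeping: getting the two edge directions right from the reciprocity relation, and confirming that the min/max inequalities encode precisely the existence of an out-edge and an in-edge at $k$ rather than some stronger requirement. The path-concatenation in the converse is routine once the reduction is in place, so I expect the main (modest) obstacle to be stating the digraph reduction cleanly rather than any genuine mathematical difficulty.
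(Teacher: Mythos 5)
Your proof is correct. Note that the paper itself states Theorem \ref{thext} as background cited from \cite{FJ1} and gives no proof, so there is nothing internal to compare against; your argument---reducing efficiency to strong connectivity via Theorem \ref{blanq}, translating the two inequalities into the existence of an out-edge and an in-edge at vertex $k$, and then the one-vertex-extension lemma for strongly connected digraphs---is the natural route, matches how the paper manipulates $G(A,w)$ throughout (e.g.\ in Lemma \ref{ls+2} and Theorem \ref{ln5}), and is complete as written.
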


\bigskip

Before we state the inductive method to construct all efficient vectors for an
arbitrary $A\in\mathcal{PC}_{n}$ \cite{FJ2}, we introduce the following
notation, which will be used throughout the paper. For $A\in\mathcal{PC}_{n}$
and $K\subseteq\{1,\ldots,n\},$ we denote by $\mathcal{E}(A,K)$ the set of
vectors $w$ efficient for $A$ and such that $w[K]$ is efficient for $A[K],$
that is, $w\in\mathcal{E}(A)$ and $w[K]\in\mathcal{E}(A[K]).$ We abbreviate
$\mathcal{E}(A,\{1,\ldots,n\}\backslash\{i\})$ as $\mathcal{E}(A;i).$

In \cite{FJ2} it was shown that any efficient vector for $A\in\mathcal{PC}%
_{n}$ has at least two $(n-1)$-subvectors efficient for the corresponding
principal submatrices of $A$ in $\mathcal{PC}_{n-1}.$ As a consequence, we
obtained the following important result.

\begin{theorem}
\cite{FJ2}\label{cunion}Let $A\in\mathcal{PC}_{n}$, $n\geq4.$ Then,
\[
\mathcal{E}(A)=%
{\displaystyle\bigcup\limits_{i=1}^{n}}
\mathcal{E}(A;i)=%
{\displaystyle\bigcup\limits_{i=1,\text{ }i\neq p}^{n}}
\mathcal{E}(A;i),
\]
for any $p\in\{1,\ldots,n\}.$
\end{theorem}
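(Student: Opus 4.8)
The plan is to obtain the theorem as a direct consequence of the cited fact that every efficient vector for $A\in\mathcal{PC}_n$ (with $n\geq 4$) has at least two $(n-1)$-subvectors that are efficient for the corresponding principal submatrices, together with the trivial observation that each $\mathcal{E}(A;i)$ is by definition contained in $\mathcal{E}(A)$.

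First I would dispose of the easy inclusions. Unwinding the notation, $\mathcal{E}(A;i)=\{w\in\mathcal{E}(A):w(i)\in\mathcal{E}(A(i))\}$, so $\mathcal{E}(A;i)\subseteq\mathcal{E}(A)$ for every $i$; hence any union of these sets lies inside $\mathcal{E}(A)$, and in particular both $\bigcup_{i=1}^{n}\mathcal{E}(A;i)$ and $\bigcup_{i\neq p}\mathcal{E}(A;i)$ do. It then suffices to establish the single reverse inclusion $\mathcal{E}(A)\subseteq\bigcup_{i\neq p}\mathcal{E}(A;i)$ for an arbitrary fixed $p$, since this is the most restrictive of the three and pins all of them together through the chain $\mathcal{E}(A)\subseteq\bigcup_{i\neq p}\mathcal{E}(A;i)\subseteq\bigcup_{i=1}^{n}\mathcal{E}(A;i)\subseteq\mathcal{E}(A)$.

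For that reverse inclusion I would take an arbitrary $w\in\mathcal{E}(A)$ and apply the \cite{FJ2} fact: there exist two distinct indices $i_1\neq i_2$ with $w(i_1)\in\mathcal{E}(A(i_1))$ and $w(i_2)\in\mathcal{E}(A(i_2))$, that is, $w\in\mathcal{E}(A;i_1)\cap\mathcal{E}(A;i_2)$. The decisive step is the counting observation that, because $i_1\neq i_2$, at most one of them can coincide with the excluded index $p$; thus at least one, say $i_1$, satisfies $i_1\neq p$, giving $w\in\mathcal{E}(A;i_1)\subseteq\bigcup_{i\neq p}\mathcal{E}(A;i)$.

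I expect no genuine obstacle once the ``at least two subvectors'' property is available, as the remaining argument is purely set-theoretic; all the force of the theorem is carried by that cited result, which is precisely what upgrades the weak conclusion ``$w$ lies in some $\mathcal{E}(A;i)$'' to the sharp statement that any one index may be omitted. Were I to seek a self-contained proof, the real work would be re-proving that lemma, which by Theorem \ref{blanq} is equivalent to the graph-theoretic claim that a strongly connected digraph $G(A,w)$ in which every pair of vertices carries at least one arc possesses at least two vertices whose deletion preserves strong connectivity; I would attack this statement about non-separating vertices in such semicomplete digraphs using the Hamiltonian structure forced by strong connectivity.
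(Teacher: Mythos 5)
Your proposal is correct and matches the paper's own treatment: the paper also derives this theorem as an immediate consequence of the cited fact from \cite{FJ2} that every efficient vector has at least two efficient $(n-1)$-subvectors, combined with exactly your counting observation that two distinct indices cannot both equal the excluded index $p$. The only difference is that the paper leaves this set-theoretic step implicit ("As a consequence, we obtained the following important result"), whereas you spell it out.
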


\section{General block perturbed consistent matrices\label{s3}}

In this section we study the efficient vectors for a reciprocal matrix
obtained from a consistent matrix by modifying entries in an $s$-by-$s$
principal submatrix. As mentioned before, we call such a matrix an $s$-block
perturbed consistent matrix. With a possible permutation similarity, such
matrices are of the form%
\[
A=ww^{(-T)}+\left[
\begin{array}
[c]{cc}%
C-vv^{(-T)} & 0\\
0 & 0
\end{array}
\right]  \in\mathcal{PC}_{n},
\]
for some $C\in\mathcal{PC}_{s}$ and some $w=\left[
\begin{array}
[c]{cccc}%
v^{T} & w_{s+1} & \cdots & w_{n}%
\end{array}
\right]  ^{T}\in\mathcal{V}_{n}$ with $v=\left[
\begin{array}
[c]{ccc}%
w_{1} & \ldots & w_{s}%
\end{array}
\right]  ^{T}\in\mathcal{V}_{s}.$ Here $w^{(-T)}$ and $v^{(-T)}$ denote the
row vectors $\left[
\begin{array}
[c]{ccc}%
\frac{1}{w_{1}} & \ldots & \frac{1}{w_{n}}%
\end{array}
\right]  $ and $\left[
\begin{array}
[c]{ccc}%
\frac{1}{w_{1}} & \ldots & \frac{1}{w_{s}}%
\end{array}
\right]  $, respectively$.$ For $D=\operatorname*{diag}(w_{1},\ldots w_{n}),$
we have%
\[
D^{-1}AD=\left[
\begin{array}
[c]{cc}%
B & J\\
J & J
\end{array}
\right]  ,
\]
for some $B\in\mathcal{PC}_{s}.$ Because of Lemma \ref{lsim}, to study the
efficient vectors for $A$, we may assume $D=I_{n},$ that is,
\begin{equation}
A=A_{n}(B)=\left[
\begin{array}
[c]{cc}%
B & J_{s,n-s}\\
J_{n-s,s} & J_{n-s}%
\end{array}
\right]  \in\mathcal{PC}_{n}. \label{BJ}%
\end{equation}
Note that, for $i=1,\ldots,s,$ we have $A(i)=A_{n-1}(B(i)),$ and, for
$i=s+1,\ldots,n,$ we have $A(i)=A_{n-1}(B).$

\bigskip

We make some observations regarding efficient vectors for block perturbed
consistent matrices.

\begin{lemma}
\label{lpermut}Let $A\in\mathcal{PC}_{n}$ as in (\ref{BJ}), with
$B\in\mathcal{PC}_{s}$ and $n>s.$ If $w$ is efficient for $A$ then, for any
$(n-s)$-by-$(n-s)$ permutation matrix $Q,$ $(I_{s}\oplus Q)w$ is efficient for
$A.$
\end{lemma}

\begin{proof}
If $w$ is efficient for $A,$ by Lemma \ref{lsim}, we have that $(I_{s}\oplus
Q)w$ is efficient for $(I_{s}\oplus Q)A(I_{s}\oplus Q^{T})=A.$
\end{proof}

\bigskip

The proof of the next auxiliary lemma is similar to the one of Lemma 6 (item
1.) in \cite{FJ1} and, therefore, is omitted.

\begin{lemma}
\label{laux}Let $A=A_{n}(B)\in\mathcal{PC}_{n}$ as in (\ref{BJ}), with
$B\in\mathcal{PC}_{s}$ and $n>s.$ Let $w=\left[
\begin{array}
[c]{cccc}%
w_{1} & w_{2} & \cdots & w_{n}%
\end{array}
\right]  ^{T}.$ If $w_{p}=w_{q},$ for some $p,q\in\{s+1,\ldots,n\}$ with
$p\neq q,$ then $w$ is efficient for $A$ if and only if $w(p)$ is efficient
for $A(p)=A_{n-1}(B).$
\end{lemma}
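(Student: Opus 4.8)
The plan is to work entirely at the level of the digraph characterization of efficiency (Theorem \ref{blanq}) and to exploit the fact that, because $p,q\in\{s+1,\ldots,n\}$, the vertices $p$ and $q$ behave as interchangeable twins in $G(A,w)$. Write $G=G(A,w)$ and let $G'=G(A(p),w(p))$, which, by the observation preceding Theorem \ref{blanq}, is exactly the subgraph of $G$ induced by $V'=\{1,\ldots,n\}\setminus\{p\}$ (here $A(p)=A[K]$ and $w(p)=w[K]$ with $K=V'$). The statement to be proved is then precisely that $G$ is strongly connected if and only if $G'$ is strongly connected.

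First I would record the structural facts coming from $w_{p}=w_{q}$ and the pattern of $A$. Since $p\geq s+1$, every entry $a_{rp}$ and $a_{pr}$ with $r\neq p$ equals $1$, and likewise for $q$; hence an edge $i\rightarrow j$ of $G$ with $i$ or $j$ in $\{p,q\}$ is present exactly when $w_{i}\geq w_{j}$. Two consequences follow. (i) Both $p\rightarrow q$ and $q\rightarrow p$ are edges of $G$, since $w_{p}=w_{q}$. (ii) For every vertex $r\neq p,q$ we have $r\rightarrow p$ iff $r\rightarrow q$, and $p\rightarrow r$ iff $q\rightarrow r$; that is, $p$ and $q$ have identical in- and out-neighborhoods outside $\{p,q\}$.

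For the direction ``$G'$ strongly connected $\Rightarrow$ $G$ strongly connected'', I would take an arbitrary ordered pair of distinct vertices of $G$ and route a directed path: a pair inside $V'$ is joined within $G'$; to reach $p$ from any $i\in V'$ I would follow a path in $G'$ to $q$ and then use the edge $q\rightarrow p$ from (i), and symmetrically to leave $p$ I would use $p\rightarrow q$ and a path in $G'$. For the converse, given a simple path from $i$ to $j$ in $G$ with $i,j\in V'$: if it avoids $p$ it already lies in $G'$; if it meets $p$ (necessarily at an interior vertex, appearing exactly once), I would reroute it through $q$, using (ii) to replace the edges entering and leaving $p$ by the corresponding edges at $q$, and (i) to shortcut when $q$ is adjacent to $p$ along the path. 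This produces a walk in $G'$ from $i$ to $j$, which establishes reachability, and hence strong connectivity of $G'$.

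The main obstacle is the bookkeeping in this last rerouting step: one must treat separately the subcases where the predecessor or the successor of $p$ on the path equals $q$ (so the $2$-cycle $p\leftrightarrow q$ is shortcut away) versus the case where neither does (so $p$ is simply substituted by $q$ via (ii)), and one must check that the substitution never reintroduces $p$. Apart from this routine case analysis, the conclusion is immediate from (i) and (ii), which is exactly why the argument parallels the proof of Lemma 6 (item 1.) in \cite{FJ1}.
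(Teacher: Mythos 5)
Your proof is correct. The paper omits its own proof of Lemma \ref{laux}, saying only that it is similar to that of Lemma 6 (item 1.) in \cite{FJ1}; your argument---using Theorem \ref{blanq} to reduce the claim to strong connectivity, observing that $p$ and $q$ are twins in $G(A,w)$ (the two-cycle $p\leftrightarrow q$ plus identical in- and out-neighborhoods outside $\{p,q\}$, both consequences of $a_{rp}=a_{pr}=a_{rq}=a_{qr}=1$ and $w_p=w_q$), and then routing paths through $q$ in one direction and rerouting simple paths around $p$ in the other---is exactly the kind of digraph argument being alluded to, and the case analysis you outline (predecessor or successor of $p$ equal to $q$, or neither) is complete, since a walk rather than a path suffices for reachability.
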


\bigskip

\subsection{A class of efficient vectors}

Here, for $A\in\mathcal{PC}_{n}$ as in (\ref{BJ}), with $B\in\mathcal{PC}%
_{s},$ we give a characterization of the subset of $\mathcal{E}(A)$ formed by
the efficient vectors for $A$ whose subvector formed by the first $s$ entries
is efficient for $B$,
\[
\mathcal{E}(A,\{1,\ldots,s\})=\left\{  w\in\mathcal{E}(A):w[\{1,\ldots
,s\}]\in\mathcal{E}(B)\right\}  .
\]

This characterization will be used in the rest of the paper.

\bigskip

The next lemma shows that, if $w\in\mathcal{E}(A,\{1,\ldots,s\}),$ then $w(i)$
is efficient for $A(i)=A_{n-1}(B),$ for any $i\in\{s+1,\ldots,n\}.$

\begin{lemma}
\label{ls+2}Let $A\in\mathcal{PC}_{n}$ be as in (\ref{BJ}), with
$B\in\mathcal{PC}_{s}$ and $n>s.$ Then, for any $i\in\{s+1,\ldots,n\},$%
\begin{equation}
\mathcal{E}(A,\{1,\ldots,s\})\subseteq\mathcal{E}(A;i). \label{rr}%
\end{equation}

\end{lemma}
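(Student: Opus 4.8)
The plan is to translate the statement entirely into the language of digraphs via Theorem~\ref{blanq} and then argue by rerouting paths. Fix $i\in\{s+1,\ldots,n\}$ and let $w\in\mathcal{E}(A,\{1,\ldots,s\})$. Unwinding the definitions, membership in $\mathcal{E}(A;i)=\mathcal{E}(A,\{1,\ldots,n\}\setminus\{i\})$ requires $w\in\mathcal{E}(A)$, which holds by hypothesis, together with $w(i)\in\mathcal{E}(A(i))$, where $A(i)=A_{n-1}(B)$. So the entire task reduces to showing $w(i)\in\mathcal{E}(A(i))$. By Theorem~\ref{blanq} it is enough to prove that $G(A(i),w(i))$ is strongly connected, and since this is exactly the subgraph of $G(A,w)$ induced by the vertices $\{1,\ldots,n\}\setminus\{i\}$, the goal becomes: deleting vertex $i$ from the strongly connected digraph $G(A,w)$ leaves a strongly connected digraph.

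Next I would record the crucial structural feature of the index $i$. Since $i\in\{s+1,\ldots,n\}$, every entry $a_{ij}$ and $a_{ji}$ with $j\neq i$ equals $1$ in $A=A_{n}(B)$. Hence in $G(A,w)$ an edge $a\rightarrow i$ is present exactly when $w_{a}\geq w_{i}$, and an edge $i\rightarrow b$ exactly when $w_{i}\geq w_{b}$. Consequently, whenever a directed path in $G(A,w)$ passes through $i$ in a segment $a\rightarrow i\rightarrow b$ (with $a,b\neq i$, as $G$ has no loops), we obtain $w_{a}\geq w_{i}\geq w_{b}$, so $w_{a}\geq w_{b}$.

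The core step is then to bypass every occurrence of $i$. Given distinct vertices $u,v\in\{1,\ldots,n\}\setminus\{i\}$, take a directed path from $u$ to $v$ in $G(A,w)$, available by strong connectivity of $G(A,w)$, and replace each segment $a\rightarrow i\rightarrow b$ as follows. If $a$ and $b$ are not both in $\{1,\ldots,s\}$, then $a_{ab}=1$, so $w_{a}\geq w_{b}$ already yields the direct edge $a\rightarrow b$ in $G(A(i),w(i))$. If instead $a,b\in\{1,\ldots,s\}$, I invoke the remaining hypothesis: $w[\{1,\ldots,s\}]\in\mathcal{E}(B)$, so by Theorem~\ref{blanq} the subgraph $G(B,w[\{1,\ldots,s\}])$ is strongly connected, providing a directed path from $a$ to $b$ using only vertices in $\{1,\ldots,s\}$, none of which is $i$. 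After all such replacements the resulting walk from $u$ to $v$ avoids $i$ entirely, establishing a path in $G(A(i),w(i))$.

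I expect the only delicate point to be the case in which both neighbours of $i$ along the path lie in the perturbed block $\{1,\ldots,s\}$: there the weak inequality $w_{a}\geq w_{b}$ need not produce an edge of $G(B,w[\{1,\ldots,s\}])$, and it is precisely the efficiency of $w[\{1,\ldots,s\}]$ for $B$, hence strong connectivity of the induced subgraph on $\{1,\ldots,s\}$, that rescues the argument. Since $u,v$ were arbitrary, $G(A(i),w(i))$ is strongly connected, so $w(i)\in\mathcal{E}(A(i))$ and thus $w\in\mathcal{E}(A;i)$, giving the claimed inclusion.
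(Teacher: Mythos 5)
Your proof is correct, and it takes a genuinely different route from the paper's. Both arguments make the same (essentially forced) reduction via Theorem~\ref{blanq}: since $w\in\mathcal{E}(A)$ by hypothesis, everything comes down to showing that the induced subgraph $G(A(i),w(i))$ of the strongly connected digraph $G(A,w)$ is still strongly connected. From there the paper proceeds globally: it first normalizes $w$ by a permutation similarity $Q_1\oplus Q_2$ (Lemma~\ref{lsim}) so that $w_1\geq\cdots\geq w_s$ and $w_{s+1}\geq\cdots\geq w_n$, extracts from strong connectivity the inequalities $w_1\geq w_{s+1}$ and $w_n\geq w_s$ (i.e.\ the edges $1\rightarrow s+1$ and $n\rightarrow s$), and then writes down explicit monotone paths from $1$ through all of $\{s+1,\ldots,n\}\setminus\{i\}$ to $s$, which combine with strong connectivity of $G(B,w[\{1,\ldots,s\}])$ to finish. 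You instead do local surgery on arbitrary paths: any passage $a\rightarrow i\rightarrow b$ forces $w_a\geq w_i\geq w_b$ because every entry in row and column $i$ equals $1$, and the detour around $i$ is either the single edge $a\rightarrow b$ (when $a,b$ are not both in $\{1,\ldots,s\}$, so that $a_{ab}=1$) or a path inside the block digraph (when $a,b\in\{1,\ldots,s\}$, which is exactly where the hypothesis $w[\{1,\ldots,s\}]\in\mathcal{E}(B)$ is needed, as you correctly flag). Your version needs no normalization and no explicit path construction, and it extends verbatim to deleting any subset of $\{s+1,\ldots,n\}$ simultaneously; the paper's more constructive approach has the downstream benefit that the sorted-path picture it develops is reused almost unchanged in the proof of Theorem~\ref{ln5}, which explicitly invokes ``arguments similar to those in the proof of Lemma~\ref{ls+2}.''
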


\begin{proof}
If $n=s+1$ the result is obvious (in this case $\subseteq$ can be replaced by
$=$). Suppose that $n>s+1.$ Let $w\in\mathcal{E}(A,\{1,\ldots,s\}).$ With a
possible permutation similarity on $A$ via an $n$-by-$n$ permutation matrix
$Q=Q_{1}\oplus Q_{2},$ with $Q_{1}$ of size $s$-by-$s$, and taking into
account Lemma \ref{lsim}, we assume, without loss of generality, that
$w_{1}\geq\cdots\geq w_{s}$ and $w_{s+1}\geq\cdots\geq w_{n}.$

By Theorem \ref{blanq}, $G(A,w)$ and $G(A[\{1,\ldots,s\}],w[\{1,\ldots,s\}])$
are strongly connected. Let $i\in\{s+1,\ldots,n\}.$ We show that the subgraph
$G(A(i),w(i))$ is strongly connected, implying that $w(i)$ is efficient for
$A(i).$ Note that, for $j\in\{1,\ldots,n\},$ since $a_{ij}=1$ and $a_{ji}=1$,
$G(A,w)$ contains the edge $i\rightarrow j$ if and only if $w_{i}\geq w_{j}$
and contains the edge $j\rightarrow i$ if and only if $w_{j}\geq w_{i}.$ In
particular, any edge $i\rightarrow j$ with $j\in\{s+1,\ldots,n\},$ $j>i,$ is
in $G(A,w).$

Since $G(A,w)$ is strongly connected, the edges $1\rightarrow s+1$ and
$n\rightarrow s$ are in $G(A,w),$ that is, $w_{1}\geq w_{s+1}>w_{n}\geq
w_{s}.$ This implies that the edges $1\rightarrow j$ and $j\rightarrow s$ are
in $G(A,w)$ for any $j\in\{s+1,\ldots,n\}.$ Then, the paths%
\[
1\rightarrow s+2\rightarrow\cdots\rightarrow n\rightarrow s,
\]%
\[
1\rightarrow s+1\rightarrow s+2\rightarrow\cdots\rightarrow n-1\rightarrow s,
\]
and
\[
1\rightarrow s+1\rightarrow\cdots\rightarrow i-1\rightarrow i+1\rightarrow
\cdots\rightarrow n\rightarrow s,
\]
if $s+1<i<n,$ are in $G(A,w)$. Since $G(A[\{1,\ldots,s\}],w[\{1,\ldots,s\})$
is strongly connected, it follows that $G(A(i),w(i))$ is strongly connected.
\end{proof}

\bigskip

We note that, when $n>s+1,$ the reverse inclusion in (\ref{rr}) may not hold.

\begin{example}
\label{ex21}Let%
\[
A=\left[
\begin{array}
[c]{ccccccc}%
1 & 2 & 1 & 3 & 1 & 1 & 1\\
\frac{1}{2} & 1 & \frac{1}{4} & 1 & 1 & 1 & 1\\
\frac{1}{1} & 4 & 1 & 2 & 1 & 1 & 1\\
\frac{1}{3} & 1 & \frac{1}{2} & 1 & 1 & 1 & 1\\
1 & 1 & 1 & 1 & 1 & 1 & 1\\
1 & 1 & 1 & 1 & 1 & 1 & 1\\
1 & 1 & 1 & 1 & 1 & 1 & 1
\end{array}
\right]  ,
\]
$s=4$ and $w=\left[
\begin{array}
[c]{ccccccc}%
8 & 2 & 6 & 4 & 7 & 3 & 5
\end{array}
\right]  ^{T}.$ We have $w\in\mathcal{E}(A).$ Moreover, $w\in\mathcal{E}(A;i)$
if and only if $i\in\{5,6\}.$ However, $w[\{1,2,3,4\}]\notin\mathcal{E}%
(A[\{1,2,3,4\}]).$
\end{example}

We now characterize all the extensions of vectors in $\mathcal{E}(B)$ to
vectors in $\mathcal{E}(A).$

\begin{theorem}
\label{lcompl}Let $A\in\mathcal{PC}_{n}$ be as in (\ref{BJ}), with
$B\in\mathcal{PC}_{s}$ and $n>s.$ Let $w\in\mathcal{V}_{n}\ $with
$w[\{1,\ldots,s\}]\in\mathcal{E}(B).$ Then, $w\in\mathcal{E}(A)$ if and only
if%
\begin{equation}
\min_{1\leq i\leq s}w_{i}\leq w_{s+1},\ldots,w_{n}\leq\max_{1\leq i\leq
s}w_{i}. \label{ff1}%
\end{equation}

\end{theorem}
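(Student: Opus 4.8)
The plan is to use the digraph characterization of efficiency from Theorem \ref{blanq} together with the already-established inclusion of Lemma \ref{ls+2}. Throughout, I would exploit Lemma \ref{lpermut} to reorder the last $n-s$ coordinates, so without loss of generality I assume $w_{s+1}\geq w_{s+2}\geq\cdots\geq w_{n}$. Under this assumption, for indices $p,q\in\{s+1,\ldots,n\}$ with $p<q$ the edge $p\rightarrow q$ always lies in $G(A,w)$ because $a_{pq}=1$ and $w_{p}\geq w_{q}$; so the vertices $s+1,\ldots,n$ form a directed path $s+1\rightarrow s+2\rightarrow\cdots\rightarrow n$ in that fixed order, and the only question for strong connectivity is how this block connects to the block $\{1,\ldots,s\}$.

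First I would prove the forward direction (necessity of (\ref{ff1})). Assuming $w\in\mathcal{E}(A)$, by Theorem \ref{blanq} the digraph $G(A,w)$ is strongly connected. I would argue that $w_{s+1}=\max_{s+1\leq j\leq n}w_{j}$ cannot strictly exceed $\max_{1\leq i\leq s}w_{i}$: if it did, then for every $j\in\{s+1,\ldots,n\}$ and every $i\in\{1,\ldots,s\}$ we would have $w_{j}\geq w_{s+1}>w_{i}$ (using the ordering in the block), forcing all edges between the two blocks to point from the last block into the first block, so no edge would enter vertex $s+1$; this contradicts strong connectivity. A symmetric argument applied to $w_{n}=\min_{s+1\leq j\leq n}w_{j}$ shows $w_{n}\geq\min_{1\leq i\leq s}w_{i}$. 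Since $w_{s+1}$ and $w_{n}$ are the maximum and minimum of the last block, these two bounds give exactly (\ref{ff1}).

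For the converse I would assume $w[\{1,\ldots,s\}]\in\mathcal{E}(B)$ and that (\ref{ff1}) holds, and show $G(A,w)$ is strongly connected. The hypothesis $w[\{1,\ldots,s\}]\in\mathcal{E}(B)$ gives, via Theorem \ref{blanq}, that the induced subgraph on $\{1,\ldots,s\}$ (which is $G(B,w[\{1,\ldots,s\}])$) is strongly connected, so the two blocks are each internally strongly connected; it remains to exhibit edges crossing in both directions. Condition (\ref{ff1}) says $w_{s+1}\leq\max_{i\leq s}w_{i}$, so picking an index $i_{0}\leq s$ with $w_{i_{0}}\geq w_{s+1}$ yields the edge $i_{0}\rightarrow s+1$ (since $a_{i_{0},s+1}=1$), giving a route from the first block into the head of the ordered path in the second block; dually $w_{n}\geq\min_{i\leq s}w_{i}$ gives an index $i_{1}\leq s$ with $w_{i_{1}}\leq w_{n}$, hence the edge $n\rightarrow i_{1}$, a route from the tail of the path back into the first block. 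Combining the internal strong connectivity of each block with the path $s+1\rightarrow\cdots\rightarrow n$ and these two crossing edges shows every vertex can reach every other, so $G(A,w)$ is strongly connected.

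The main obstacle I expect is handling the strict-versus-nonstrict inequalities and ties cleanly in the necessity direction, since the digraph has an edge in at least one direction between any two vertices and can have edges in both directions when entries are equal. The permutation normalization via Lemma \ref{lpermut} is what makes the path structure on the last block transparent and keeps this bookkeeping manageable; an alternative, possibly slicker, route would be to induct on $n-s$ using Lemma \ref{ls+2} (which already places any such $w$ in $\mathcal{E}(A;i)$) together with the extension criterion of Theorem \ref{thext} applied at a coordinate $k\in\{s+1,\ldots,n\}$, reducing the bound $\min_{i}\frac{w_{i}}{a_{ik}}\leq w_{k}\leq\max_{i}\frac{w_{i}}{a_{ik}}$ to (\ref{ff1}) using that the relevant entries $a_{ik}$ equal $1$.
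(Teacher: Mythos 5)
Your route is genuinely different from the paper's: the paper proves Theorem \ref{lcompl} by induction on $n$, combining Lemma \ref{ls+2} (to drop the last coordinate of an efficient vector) with the extension criterion of Theorem \ref{thext} (to reinsert it) --- indeed, the ``alternative, possibly slicker, route'' you sketch in your final paragraph is precisely the paper's proof. Your direct digraph argument for sufficiency is correct: after sorting the last block via Lemma \ref{lpermut}, the strongly connected induced subgraph on $\{1,\ldots,s\}$, the path $s+1\rightarrow\cdots\rightarrow n$, and the two crossing edges $i_{0}\rightarrow s+1$ and $n\rightarrow i_{1}$ guaranteed by (\ref{ff1}) do force $G(A,w)$ to be strongly connected. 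One slip there: the block $\{s+1,\ldots,n\}$ is \emph{not} ``internally strongly connected'' in general (its induced subgraph has edges in both directions only at ties), but your argument never needs this, since the directed path through the entire block plus the two crossing edges already suffice.

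The necessity direction, however, has a genuine flaw which you flag but leave unresolved. The chain ``$w_{j}\geq w_{s+1}>w_{i}$ for every $j\in\{s+1,\ldots,n\}$'' is backwards under your normalization (the sorting gives $w_{s+1}\geq w_{j}$), so it is false that all edges between the blocks point from the last block into the first; and the conclusion ``no edge would enter vertex $s+1$'' fails exactly when $w_{s+2}=w_{s+1}$, since then $s+2\rightarrow s+1$ \emph{is} an edge of $G(A,w)$. This tie problem is not cosmetic: a single vertex of maximal weight can have incoming edges from tied vertices in a strongly connected digraph, so looking at the vertex $s+1$ alone cannot produce the contradiction. The repair is to argue on a set rather than a vertex: suppose $\max_{j>s}w_{j}>\max_{1\leq i\leq s}w_{i}$ and let $S=\{j>s:w_{j}=\max_{k>s}w_{k}\}$. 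Any edge $u\rightarrow j$ with $j\in S$ requires $w_{u}\geq w_{j}$ (all relevant entries $a_{uj}$ equal $1$), and no $u\notin S$ satisfies this, whether $u\leq s$ or $u>s$; hence no directed path from outside $S$ reaches $S$, contradicting Theorem \ref{blanq}. The symmetric set argument at the minimum completes necessity. With this fix your proof is complete, and it in fact yields slightly more than the paper's induction: condition (\ref{ff1}) is necessary for \emph{every} $w\in\mathcal{E}(A)$, without invoking the hypothesis $w[\{1,\ldots,s\}]\in\mathcal{E}(B)$, which the paper's converse does use through Lemma \ref{ls+2}; this stronger statement is consistent with Example \ref{ex21}.
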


\begin{proof}
The proof is by induction on $n.$ If $n=s+1,$ the result follows from Theorem
\ref{thext}. Suppose that $n>s+1$ and $w[\{1,\ldots,s\}]\in\mathcal{E}(B).$

Suppose that (\ref{ff1}) holds, in order to show that $w\in\mathcal{E}(A)$.
Then, we have%
\begin{equation}
\min_{1\leq i\leq s}w_{i}\leq w_{s+1},\ldots,w_{n-1}\leq\max_{1\leq i\leq
s}w_{i}, \label{ff2}%
\end{equation}
implying, by the induction hypothesis, that $w(n)\in\mathcal{E}(A(n)).$ Note
that $A(n)=A_{n-1}(B).$ Since
\[
\min_{1\leq i\leq n-1}w_{i}\leq\min_{1\leq i\leq s}w_{i}\qquad\ \text{and}%
\qquad\max_{1\leq i\leq s}w_{i}\leq\max_{1\leq i\leq n-1}w_{i},
\]
Condition (\ref{ff1}) implies
\begin{equation}
\min_{1\leq i\leq n-1}w_{i}\leq w_{n}\leq\max_{1\leq i\leq n-1}w_{i}.
\label{gg3}%
\end{equation}
Thus, by Theorem \ref{thext}, $w\in\mathcal{E}(A;n)$.

\medskip

As for the converse, suppose that $w\in\mathcal{E}(A),$ in order to show that
(\ref{ff1}) holds$.$ By Lemma \ref{ls+2}, $w(n)$ is efficient for $A(n).$ By
the induction hypothesis, (\ref{ff2}) holds. By Theorem \ref{thext},
(\ref{gg3}) holds. Then, (\ref{ff1}) follows.
\end{proof}

\bigskip

Theorem \ref{lcompl} allows us to easily construct classes of efficient
vectors for $A$ as in (\ref{BJ}) from efficient vectors for $B.$

\begin{example}
\label{expp}Here we give a class of efficient vectors for the matrix
$A=A_{7}(B)$ as in (\ref{BJ}), with%
\[
B=\left[
\begin{array}
[c]{cccc}%
1 & 1 & \frac{3}{4} & \frac{1}{2}\\
1 & 1 & \frac{1}{4} & 1\\
\frac{4}{3} & 4 & 1 & 2\\
2 & 1 & \frac{1}{2} & 1
\end{array}
\right]  .
\]
Let $D=\operatorname*{diag}(1,2,4,2)$ and $C=D^{-1}BD,$ that is,
\begin{equation}
C=\left[  \allowbreak%
\begin{array}
[c]{cccc}%
1 & 2 & 3 & 1\\
\frac{1}{2} & 1 & \frac{1}{2} & 1\\
\frac{1}{3} & 2 & 1 & 1\\
1 & 1 & 1 & 1
\end{array}
\right]  . \label{CC}%
\end{equation}
The set $\mathcal{E(}C)$ was determined in \cite[Section 5]{FJ2}. In
particular, it is noticed that%
\begin{gather*}
\left\{  \left[
\begin{array}
[c]{cccc}%
w_{1} & 4 & 6 & 5
\end{array}
\right]  ^{T}:5\leq w_{1}\leq18\right\}  \subseteq\mathcal{E}(C),\\
\left\{  \left[
\begin{array}
[c]{cccc}%
15 & w_{2} & 8 & 12
\end{array}
\right]  ^{T}:4\leq w_{2}\leq12\right\}  \subseteq\mathcal{E}(C),
\end{gather*}
By Lemma \ref{lsim},
\[
\mathcal{E(}B)=\{Dw\in\mathcal{V}_{4}:w\in\mathcal{E(}C)\}.
\]
Thus, we get
\begin{gather*}
\left\{  \left[
\begin{array}
[c]{cccc}%
w_{1} & 8 & 24 & 10
\end{array}
\right]  ^{T}:5\leq w_{1}\leq18\right\}  \subseteq\mathcal{E}(B),\\
\left\{  \left[
\begin{array}
[c]{cccc}%
15 & 2w_{2} & 32 & 24
\end{array}
\right]  ^{T}:4\leq w_{2}\leq12\right\}  \subseteq\mathcal{E}(B).
\end{gather*}
By Theorem \ref{lcompl},
\[
\left\{  \left[
\begin{array}
[c]{ccccccc}%
w_{1} & 8 & 24 & 10 & w_{5} & w_{6} & w_{7}%
\end{array}
\right]  ^{T}:5\leq w_{1}\leq18\text{ and }\min\{8,w_{1}\}\leq w_{5}%
,w_{6},w_{7}\leq24\right\}  \subseteq\mathcal{E}(A),
\]%
\[
\left\{  \left[
\begin{array}
[c]{ccccccc}%
15 & 2w_{2} & 32 & 24 & w_{5} & w_{6} & w_{7}%
\end{array}
\right]  ^{T}:4\leq w_{2}\leq12\text{ and }\min\{15,2w_{2}\}\leq w_{5}%
,w_{6},w_{7}\leq32\right\}  \subseteq\mathcal{E}(A).
\]

\end{example}

\subsection{Remarks on the Perron eigenvector}

We next give some observations about the Perron eigenvector of block perturbed
consistent matrices.

We recall that the spectral radius of a positive square matrix $A$ is a simple
eigenvalue of $A\ $and there is a positive associated eigenvector, which is
called the \emph{Perron eigenvector} of $A$ (with a possible normalization as,
for example, having the last entry equal to $1$) \cite{HJ}.

\begin{lemma}
\label{leigenvstructure}Let $A$ be as in (\ref{BJ}), with $n>s+1.$ Then, any
eigenvector of $A$ associated with a nonzero eigenvalue has the last $n-s$
entries equal.
\end{lemma}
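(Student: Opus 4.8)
The plan is to exploit the block structure of $A=A_n(B)$ directly. Write an eigenvector as $u=\left[\begin{array}{cccc} u_1 & \cdots & u_s & u_{s+1} & \cdots & u_n\end{array}\right]^T$ and split it as $u=\left[\begin{array}{c} y \\ z\end{array}\right]$, where $y\in\mathbb{C}^s$ collects the first $s$ entries and $z\in\mathbb{C}^{n-s}$ the last $n-s$ entries. The block form (\ref{BJ}) gives the two block equations $By + J_{s,n-s}\,z = \lambda y$ and $J_{n-s,s}\,y + J_{n-s}\,z = \lambda z$. The key observation is that $J_{n-s,s}\,y$ is a scalar multiple of the all-ones vector (namely $\left(\sum_{i=1}^s u_i\right)\mathbf{1}$) and $J_{n-s}\,z$ is likewise $\left(\sum_{j=s+1}^n u_j\right)\mathbf{1}$, so the entire right-hand side of the second block equation is a constant vector.

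First I would read off the second block equation entrywise. For each $k\in\{s+1,\ldots,n\}$ it reads $\sigma_y + \sigma_z = \lambda u_k$, where $\sigma_y=\sum_{i=1}^s u_i$ and $\sigma_z=\sum_{j=s+1}^n u_j$ are fixed constants independent of $k$. Hence for any two indices $p,q\in\{s+1,\ldots,n\}$ we get $\lambda u_p = \lambda u_q$. Since $\lambda\neq 0$ by hypothesis, this forces $u_p=u_q$, which is exactly the assertion that the last $n-s$ entries are equal. This is the whole argument in essence; the hypothesis $n>s+1$ simply guarantees there are at least two such indices for the statement to have content (when $n=s+1$ there is a single trailing entry and nothing to prove).

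The step to be careful about, rather than a genuine obstacle, is justifying that $J_{n-s,s}\,y$ and $J_{n-s}\,z$ really are constant vectors and that the common value on the right-hand side is genuinely $k$-independent; this is immediate from the definition of $J$ but should be stated cleanly so the reader sees that the perturbed block $B$ and the mixed block $J_{s,n-s}$ play no role in the conclusion. I would emphasize that the argument uses only the constancy of the bottom-right $J_{n-s}$ block together with $\lambda\neq 0$, and makes no use of positivity or of $B$ being a perturbation of a consistent matrix, so the lemma holds verbatim for the whole eigenstructure off the kernel. In particular, applying it to the spectral radius (which is nonzero and simple for a positive matrix) yields that the Perron eigenvector has its last $n-s$ entries equal, which is the intended application in the sequel.
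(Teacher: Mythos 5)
Your proof is correct and follows essentially the same route as the paper: both split the eigenvector into blocks conformal with (\ref{BJ}), read the bottom block equation entrywise to see that $\lambda u_k$ equals the index-independent constant $\sigma_y+\sigma_z$, and conclude from $\lambda\neq 0$. If anything, your version is slightly cleaner in allowing complex eigenvectors, which matches the lemma's claim about \emph{any} eigenvector (the paper's notation $x\in\mathcal{V}_n$ nominally restricts to positive vectors, though its argument does not use positivity either).
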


\begin{proof}
Let $\lambda$ be a nonzero eigenvalue of $A.$ For
\[
x=\left[
\begin{array}
[c]{c}%
y\\
z
\end{array}
\right]  \in\mathcal{V}_{n},
\]
with $y\in\mathcal{V}_{s},\ $condition $Ax=\lambda x$ is equivalent to%
\begin{equation}
\left\{
\begin{array}
[c]{l}%
(B-\lambda I_{s})y+J_{s,n-s}z=0\\
J_{n-s,s}y+(J_{n-s}-\lambda I_{n-s})z=0
\end{array}
\right.  . \label{syst1}%
\end{equation}
If $z=\left[
\begin{array}
[c]{ccc}%
z_{s+1} & \cdots & z_{n}%
\end{array}
\right]  ^{T},$ and $y_{\operatorname*{sum}}$ and $z_{\operatorname*{sum}}$
are the sum of the entries of $y$ and $z,$ respectively, the second equation
in (\ref{syst1}) is equivalent to%
\[
y_{\operatorname*{sum}}+z_{\operatorname*{sum}}-\lambda z_{i}=0,\text{
}i=s+1,\ldots,n.
\]
Since $\lambda\neq0,$ the claim follows.
\end{proof}

\bigskip

An immediate consequence of Lemmas \ref{laux} and \ref{leigenvstructure} is
the following.

\begin{lemma}
\label{lsubmatrix}Let $A$ be as in (\ref{BJ}) with $n>s$, and $w$ be the
Perron eigenvector of $A.$ Then, $w\in\mathcal{E}(A)$ if and only if
$w[\{1,\ldots,s+1\}]\in\mathcal{E}(A[\{1,\ldots,s+1\}]).$
\end{lemma}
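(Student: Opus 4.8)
The plan is to peel off the last $n-s-1$ coordinates one at a time, using the fact that the Perron eigenvector has a constant tail together with the reduction afforded by Lemma \ref{laux}. First I would dispose of the degenerate case $n=s+1$: there $A[\{1,\ldots,s+1\}]=A$ and $w[\{1,\ldots,s+1\}]=w$, so the asserted equivalence is a tautology.

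So assume $n>s+1$. Since the spectral radius of the positive matrix $A$ is a nonzero eigenvalue with $w$ as its eigenvector, Lemma \ref{leigenvstructure} applies and gives $w_{s+1}=w_{s+2}=\cdots=w_n$. Now I would iterate Lemma \ref{laux}. Taking $p=n$ and $q=n-1$ (both in $\{s+1,\ldots,n\}$ and distinct, with $w_{n-1}=w_n$), Lemma \ref{laux} shows that $w\in\mathcal{E}(A)$ if and only if $w(n)\in\mathcal{E}(A(n))$; by the observation following (\ref{BJ}), $A(n)=A_{n-1}(B)$, again of the block form to which Lemma \ref{laux} applies.

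The key point for the iteration is that deleting a tail coordinate preserves the constant-tail structure: $w(n)$ restricted to its last $(n-1)-s$ coordinates still reads $w_{s+1}=\cdots=w_{n-1}$. Hence, as long as the current matrix $A_m(B)$ has $m>s+1$, its relevant subvector has at least two equal tail entries and Lemma \ref{laux} may be applied again to remove the largest remaining index. Performing this deletion for the indices $n, n-1, \ldots, s+2$ in succession, each step being an equivalence, I arrive after $n-s-1$ steps at $A_{s+1}(B)=A[\{1,\ldots,s+1\}]$ and the subvector $w[\{1,\ldots,s+1\}]$, which yields the desired equivalence.

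The substance of the argument is entirely contained in the two cited lemmas, so the only thing to verify carefully is the bookkeeping of the iteration: namely, that each reduced matrix is again of the form $A_m(B)$ (guaranteed by $A(i)=A_{n-1}(B)$ for $i\in\{s+1,\ldots,n\}$, noted after (\ref{BJ})) and that two equal tail entries persist at every stage with $m>s+1$ (guaranteed by the constant tail from Lemma \ref{leigenvstructure}). This is the main obstacle only in the sense of careful indexing, not of genuine difficulty; with these observations in place the result follows immediately, matching the claim that it is a direct consequence of Lemmas \ref{laux} and \ref{leigenvstructure}.
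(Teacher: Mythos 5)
Your proof is correct and follows exactly the route the paper intends: the paper states this lemma as ``an immediate consequence of Lemmas \ref{laux} and \ref{leigenvstructure},'' and your argument simply makes that explicit by using the constant tail from Lemma \ref{leigenvstructure} and iterating the equivalence of Lemma \ref{laux} to delete the indices $n, n-1, \ldots, s+2$ one at a time. The bookkeeping (each reduced matrix being $A_m(B)$, and equal tail entries persisting while $m>s+1$) is handled correctly, so nothing is missing.
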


\bigskip

We also note that, if $A\in\mathcal{PC}_{n}$ and $x$ is the Perron eigenvector
of $A,$ then (up to a positive scalar factor) $Px$ is the Perron eigenvector
of $B=PAP^{-1}\in\mathcal{PC}_{n}$, in which $P$ is either a permutation
matrix or a positive diagonal matrix. Thus, by Lemma \ref{lsim}, to study the
efficiency of the Perron eigenvector of $A$ we may consider any matrix
monomial similar to $A.$ \bigskip

\section{2-block perturbed consistent matrices revisited\label{s5}}

We next use Theorems \ref{cunion} and \ref{lcompl} to give a simple proof of
Theorem \ref{tmain}, which describes all the efficient vectors for a $2$-block
perturbed consistent matrix.

\bigskip

\textbf{Proof of Theorem \ref{tmain}}

If $n=3$ the result can be verified by a direct inspection of the possible
strongly connected digraphs $G(A,w)$ and applying Theorem \ref{blanq}$.$ Let
$n>3.$

\bigskip

Suppose that (\ref{f1}) or (\ref{f2}) holds. Since the result is true for
$3$-by-$3$ matrices, it follows that $w[\{1,2,3\}]\in\mathcal{E}%
(A[\{1,2,3\}]).$ If (\ref{f1}) holds, then%
\[
\min\{w_{1},w_{2},w_{3}\}=w_{2}\leq w_{4},\ldots,w_{n}\leq w_{1}=\max
\{w_{1},w_{2},w_{3}\},
\]
implying, by Theorem \ref{lcompl}, that $w\in\mathcal{E}(A)$. Similarly, if
(\ref{f2}) holds, $w\in\mathcal{E}(A).$

\bigskip

Now suppose that $w\in\mathcal{E}(A)$ in order to show that (\ref{f1}) or
(\ref{f2}) holds. By Theorem \ref{cunion} (or, more directly, by \cite[Section
4]{FJ2}), there are $j_{1},j_{2},j_{3}\in\{1,\ldots,n\},$ with $j_{1}%
<j_{2}<j_{3},$ such that $w[\{j_{1},j_{2},j_{3}\}]$ is efficient for
$A[\{j_{1},j_{2},j_{3}\}].$ If $A[\{j_{1},j_{2},j_{3}\}]$ has all entries
equal to $1,$ then, by Theorem \ref{lcompl}, $w$ is a constant vector and
(\ref{f1}) or (\ref{f2}) holds trivially. Otherwise, $j_{1}=1,$ $j_{2}=2$ and
\[
A[\{1,2,j_{3}\}]=\left[
\begin{array}
[c]{ccc}%
1 & x & 1\\
\frac{1}{x} & 1 & 1\\
1 & 1 & 1
\end{array}
\right]  .
\]
Since the result holds for $3$-by-$3$ matrices$,$ we have
\[
w_{2}\leq w_{j_{3}}\leq w_{1}\leq xw_{2}\text{ or }w_{2}\geq w_{j_{3}}\geq
w_{1}\geq xw_{2}.
\]
By Theorem \ref{lcompl}, in the former case (\ref{f1}) holds, and in the
latter case (\ref{f2}) holds.

\bigskip

From Theorems \ref{tmain} and \ref{lcompl}, we get the following.

\begin{corollary}
Let $A\in\mathcal{PC}_{n}$ be as in (\ref{BJ}), with $n\geq4$ and
$B\in\mathcal{PC}_{2}$. Then, $\mathcal{E(}A)=\mathcal{E}(A,\{1,2,j\}),$ for
any $j\in\{3,\ldots,n\}$
\end{corollary}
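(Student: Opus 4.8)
The plan is to prove the two inclusions $\mathcal{E}(A,\{1,2,j\})\subseteq\mathcal{E}(A)$ and $\mathcal{E}(A)\subseteq\mathcal{E}(A,\{1,2,j\})$ for an arbitrary fixed $j\in\{3,\ldots,n\}$. The first inclusion is immediate from the definition, since $\mathcal{E}(A,K)$ is by construction a subset of $\mathcal{E}(A)$ for every index set $K$; so the whole content lies in showing that \emph{every} efficient vector $w$ for $A$ already has $w[\{1,2,j\}]$ efficient for the $3$-by-$3$ principal submatrix $A[\{1,2,j\}]$. Because $B\in\mathcal{PC}_2$, any such submatrix has the form with $1,x$ in the top $2$-by-$2$ block and all remaining entries equal to $1$, so it is itself a (small) $2$-block perturbed consistent matrix in $\mathcal{PC}_3$.

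The natural route is to invoke Theorem \ref{tmain}, which I would already have proved: it tells me that an efficient $w$ for $A$ satisfies either \eqref{f1} or \eqref{f2}, i.e. $w_2\le w_3,\ldots,w_n\le w_1\le xw_2$ or the reversed chain of inequalities. First I would fix $j\in\{3,\ldots,n\}$ and take any $w\in\mathcal{E}(A)$. From Theorem \ref{tmain} I extract, in the case \eqref{f1}, the three inequalities $w_2\le w_j\le w_1\le xw_2$ (the middle inequality $w_j\le w_1$ and the flanking ones coming directly from the chain, since $j\ge3$). These are exactly the conditions in Theorem \ref{tmain} applied to the $3$-by-$3$ matrix $A[\{1,2,j\}]=\bigl[\begin{smallmatrix}1&x&1\\ \frac1x&1&1\\1&1&1\end{smallmatrix}\bigr]$, with the roles $(w_1,w_2,w_j)$ playing $(w_1,w_2,w_3)$. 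Hence $w[\{1,2,j\}]\in\mathcal{E}(A[\{1,2,j\}])$. The reversed case \eqref{f2} is handled symmetrically, yielding the reversed chain $w_2\ge w_j\ge w_1\ge xw_2$, again matching the efficiency criterion for the $3$-by-$3$ block. This shows $w\in\mathcal{E}(A,\{1,2,j\})$ and establishes the reverse inclusion.

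I expect the only subtle point to be bookkeeping about which entries of $w$ sit in the perturbed block: one must check that the inequalities \eqref{f1}/\eqref{f2} indeed force $w[\{1,2,j\}]$ to satisfy the $3$-by-$3$ efficiency test, which requires $w_2\le w_j\le w_1\le xw_2$ (note that $w_j$, for $j\ge 3$, is one of the unconstrained-among-themselves middle entries, so it lies between $w_2$ and $w_1$). This is routine once one writes out the chain, so there is no real obstacle; the statement is essentially a reading-off of Theorem \ref{tmain}. As an alternative, cleaner packaging, I would combine Theorem \ref{lcompl} (with $s=2$) and Theorem \ref{tmain} directly: Theorem \ref{tmain} already describes $\mathcal{E}(A)$ entirely by the chain conditions, and Theorem \ref{lcompl} says that any $w$ whose first two entries are efficient for $B$ and whose remaining entries lie between $\min\{w_1,w_2\}$ and $\max\{w_1,w_2\}$ is efficient. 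Since the chain condition places every $w_j$ ($j\ge3$) between $w_2$ and $w_1$, these two characterizations coincide, giving $\mathcal{E}(A)=\mathcal{E}(A,\{1,2,j\})$ for each $j$. The main thing to be careful about is simply that the displayed inequalities are stated uniformly for all $j\in\{3,\ldots,n\}$, so the argument genuinely produces the efficiency of $w[\{1,2,j\}]$ for \emph{each} such $j$, not merely for one.
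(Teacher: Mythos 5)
Your main argument is correct and is essentially the paper's own derivation: the inclusion $\mathcal{E}(A,\{1,2,j\})\subseteq\mathcal{E}(A)$ is definitional, and the forward inclusion follows by applying Theorem \ref{tmain} twice, once at size $n$ to get the chain (\ref{f1}) or (\ref{f2}), and once at size $3$ (equivalently, Corollary \ref{c3por3}) to recognize that $w_{2}\leq w_{j}\leq w_{1}\leq xw_{2}$, or the reversed chain, is exactly efficiency of $w[\{1,2,j\}]$ for $A[\{1,2,j\}]$.

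However, the alternative packaging you sketch at the end is incorrect and should be discarded. Theorem \ref{lcompl} with $s=2$ characterizes $\mathcal{E}(A,\{1,2\})$, the set of efficient vectors whose first two entries form a vector efficient for $B$. Since $B\in\mathcal{PC}_{2}$ is consistent, $\mathcal{E}(B)$ consists precisely of the positive multiples of the column $\left[
\begin{array}
[c]{cc}
x & 1
\end{array}
\right]^{T}$, i.e., it requires $w_{1}=xw_{2}$ exactly, whereas the chains (\ref{f1}) and (\ref{f2}) only require $w_{1}\leq xw_{2}$ (respectively $w_{1}\geq xw_{2}$). So the two descriptions do not coincide: for $x>1$, any $w$ with $w_{2}<w_{1}<xw_{2}$ and $w_{2}\leq w_{i}\leq w_{1}$ for $i\geq3$ lies in $\mathcal{E}(A)$ but not in $\mathcal{E}(A,\{1,2\})$; had your claimed coincidence been true, it would prove the false statement $\mathcal{E}(A)=\mathcal{E}(A,\{1,2\})$. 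If Theorem \ref{lcompl} is to be used (as the paper's citation of it suggests), it must be applied to the $3$-by-$3$ block $A[\{1,2,j\}]$, after a permutation similarity moving index $j$ to position $3$, viewing $A$ as a $3$-block perturbed consistent matrix: together with Corollary \ref{c3por3} this identifies $\mathcal{E}(A,\{1,2,j\})$ with the set of vectors satisfying (\ref{f1}) or (\ref{f2}), which is $\mathcal{E}(A)$ by Theorem \ref{tmain}.
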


\section{$3$-block perturbed consistent matrices\label{s6}}

\subsection{Efficient vectors}

Here we give an explicit description of the complete set of efficient vectors
for $A\in\mathcal{PC}_{n}$ as in (\ref{BJ}), with $n\geq5$ and $s=3$. For that
purpose, we will use the description given in \cite{FJ2} of the efficient
vectors for an arbitrary matrix in $\mathcal{PC}_{4}$.

We first give a refinement of the recursive method that follows from Theorem
\ref{cunion} to construct $\mathcal{E}(A),$ for $A$ as above. More precisely,
we show that to determine $\mathcal{E}(A)$ we do not need to calculate
$\mathcal{E}(A;i),$ for $i=1,2,3$.

\begin{theorem}
\label{ln5}Let $A\in\mathcal{PC}_{n}$ as in (\ref{BJ}), with $B\in
\mathcal{PC}_{3}$ and $n\geq5.$ Then,%
\[
\mathcal{E}(A)=\cup_{i=4}^{n}\mathcal{E}(A;i).
\]

\end{theorem}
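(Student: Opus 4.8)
The plan is to prove the nontrivial inclusion $\mathcal{E}(A)\subseteq\bigcup_{i=4}^{n}\mathcal{E}(A;i)$; the reverse inclusion is immediate since each $\mathcal{E}(A;i)$ consists of efficient vectors. By Theorem \ref{cunion} applied with $p=1$ we may write $\mathcal{E}(A)=\bigcup_{i=2}^{n}\mathcal{E}(A;i)$, so it is enough to show that $\mathcal{E}(A;2)\cup\mathcal{E}(A;3)\subseteq\bigcup_{i=4}^{n}\mathcal{E}(A;i)$. Since the assertion is made for every $B\in\mathcal{PC}_{3}$, it suffices by symmetry (a permutation similarity interchanging the first three coordinates, Lemma \ref{lsim}) to treat $\mathcal{E}(A;2)$. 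Thus I fix $w\in\mathcal{E}(A;2)$ and aim to produce a tail index $j\in\{4,\ldots,n\}$ with $w(j)\in\mathcal{E}(A(j))$. If two tail entries coincide, Lemma \ref{laux} already exhibits such a $j$; so I assume the tail entries are distinct and, using Lemma \ref{lpermut}, that $w_{4}>\cdots>w_{n}$.

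The hypothesis $w\in\mathcal{E}(A;2)$ means $w(2)\in\mathcal{E}(A(2))$, and $A(2)=A_{n-1}(B(2))$ with $B(2)=B[\{1,3\}]\in\mathcal{PC}_{2}$ is a $2$-block perturbed consistent matrix of order $n-1\geq4$. Normalizing $A(2)$ by a monomial similarity to the form $S(x)$ of Theorem \ref{tmain} (Lemma \ref{lsim}), the characterization (\ref{f1})--(\ref{f2}) forces the ``free'' coordinates to lie between the two perturbed-block coordinates; translated back, every tail entry $w_{4},\ldots,w_{n}$ lies in the closed interval bounded by $w_{1}$ and $w_{3}$. Combined with the ordering above, this squeezes the whole tail tightly between two of the three head values. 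I then pass to the digraph description (Theorem \ref{blanq}): recalling that $A(j)=A_{n-1}(B)$ for $j\geq4$, one has $w(j)\in\mathcal{E}(A(j))$ exactly when $G(A,w)-j$ is strongly connected, so the task becomes to locate one tail vertex whose removal preserves strong connectivity of the strongly connected digraph $G(A,w)$.

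It is convenient to split on whether the head subvector is efficient. If $w[\{1,2,3\}]\in\mathcal{E}(B)$, then Lemma \ref{ls+2}, applied to the $3$-block matrix $A(j)=A_{n-1}(B)$, gives $w(j)\in\mathcal{E}(A(j))$ for every $j\in\{4,\ldots,n\}$, and we are done at once. So the substance is the case $w[\{1,2,3\}]\notin\mathcal{E}(B)$, where the induced head digraph $G(B,w[\{1,2,3\}])$ fails to be strongly connected. Here I first record the constraints forced by strong connectivity of $G(A,w)$: since the maximal (resp.\ minimal) tail vertex must have an in-edge (resp.\ out-edge), some head value is $\geq w_{4}$ and some head value is $\leq w_{n}$, so the three head values straddle the tail. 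Because every edge incident to a tail vertex is governed solely by the comparison of $w$-entries, deleting a tail vertex $j$ can destroy strong connectivity only through a subpath $a\rightarrow j\rightarrow b$ that admits no reroute, and such a failure requires head vertices lying immediately above and immediately below $w_{j}$ in the global ordering of the entries. Counting these obstructions against the available head values — there are at most three head values, while the $n-3\geq2$ tail entries create $n-2\geq3$ gaps — shows that for $n\geq6$ not every tail vertex can be obstructed, which yields a removable $j$.

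The main obstacle is exactly this last step when $w[\{1,2,3\}]$ is not efficient for $B$: the return paths that close up strong connectivity may need to traverse head vertices, yet the head digraph is itself not strongly connected, so one cannot conclude purely from tail comparisons. The delicate point is to verify that the squeeze from Theorem \ref{tmain} (all tails trapped between two head values) leaves enough head-to-tail and tail-to-head edges to reroute around the chosen $j$. I expect the borderline case $n=5$, where the gap count $n-2=3$ equals the number of head values, to require separate treatment: there the three head values can in principle occupy all three gaps, and one must use the precise inequalities of (\ref{f1})--(\ref{f2}), rather than merely the straddling property, together with a direct inspection of the finitely many strongly connected configurations of $G(A,w)$, to exhibit a removable tail vertex.
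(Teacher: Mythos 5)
Your reduction is sound as far as it goes: invoking Theorem \ref{cunion} with $p=1$, disposing of repeated tail entries via Lemma \ref{laux}, using the symmetry of Lemma \ref{lsim} to treat only $\mathcal{E}(A;2)$, translating $w(2)\in\mathcal{E}(A(2))$ through Theorem \ref{tmain} (note $A(2)$ is literally $S(b_{13})$ in the coordinates $(w_1,w_3,w_4,\ldots,w_n)$) to get that all tail entries lie between $w_3$ and $w_1$ with $w_1\leq b_{13}w_3$ (or the reversed chain), and settling the case $w[\{1,2,3\}]\in\mathcal{E}(B)$ by Lemma \ref{ls+2} (applied to $A$, not to $A(j)$). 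But the theorem's entire difficulty sits in the remaining case, where $G(B,w[\{1,2,3\}])$ is not strongly connected, and there your argument stops being a proof. The claim that a deletion failure ``requires head vertices lying immediately above and immediately below $w_j$'' is not justified and is not the correct characterization of the obstruction; consequently the count ``three head values versus $n-2$ gaps'' does not establish anything, and you explicitly concede that $n=5$ would need a separate, unperformed inspection of configurations. This is a genuine gap: the paper's own proof spends essentially all of its effort exactly here, running a case analysis on the edges among $\{1,2,3\}$ and the extreme tail vertices and exhibiting explicit closed walks that show $G(A(4),w(4))$ or $G(A(n),w(n))$ is strongly connected.

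The gap is repairable within your framework, and more cleanly than your counting suggests. In the squeezed situation, say $w_3\leq w_i\leq w_1\leq b_{13}w_3$ for all tail $i$, the vertices $\{1,3\}\cup(\mathrm{tail}\setminus\{j\})$ induce a strongly connected subgraph for \emph{every} tail $j$: one has $1\rightarrow i\rightarrow 3$ for each remaining tail $i$, and the extra inequality $w_1\leq b_{13}w_3$ supplied by Theorem \ref{tmain} gives $3\rightarrow 1$ (the reversed chain is symmetric). Hence deleting a tail vertex $j$ can destroy strong connectivity only if $j$ is the \emph{unique} out-neighbour, or the unique in-neighbour, of vertex $2$ in $G(A,w)$. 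If the out-neighbourhood of vertex $2$ is a singleton tail vertex, then $2$ has no edge to $1$ or to $3$, so, since between any two vertices there is an edge in at least one direction, both $1\rightarrow2$ and $3\rightarrow2$ are present and the in-neighbourhood of $2$ contains $\{1,3\}$; symmetrically with the roles of in and out exchanged. So at most one tail vertex is obstructed, and since the tail has $n-3\geq2$ vertices, a removable tail vertex exists uniformly for all $n\geq5$, with no gap counting and no special $n=5$ case. Had you carried this out, your route would be genuinely different from the paper's, which never uses Theorem \ref{cunion} or Theorem \ref{tmain} in this proof: it sorts both blocks, extracts the straddling inequalities $w_1\geq w_4$ and $w_n\geq w_3$ directly from strong connectivity, and then finds explicit cycles case by case.
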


\begin{proof}
We prove the inclusion $\subseteq,$ as the other one is obvious. Let
$w\in\mathcal{E}(A).$ We show that there is $i\in\{4,\ldots,n\}$ such that
$G(A(i),w(i))$ is strongly connected, implying, by Theorem \ref{blanq}, that
$w(i)$ is efficient for $A(i)$. Taking into account Lemma \ref{lsim} and using
arguments similar to those in the proof of Lemma \ref{ls+2}, we assume,
without loss of generality, that $w_{1}\geq w_{2}\geq w_{3}$ and $w_{4}%
\geq\cdots\geq w_{n}.$ Moreover, by Lemma \ref{laux}, we assume that
$w_{4}>\cdots>w_{n}.$ Then, the path
\[
4\rightarrow\cdots\rightarrow n
\]
is in $G(A,w)$ (and no edge in the reverse direction is in $G(A,w)$)$.$ Since
the graph $G(A,w)$ is strongly connected, the edges
\[
1\rightarrow4\quad\text{ and\quad\ }n\rightarrow3
\]
are in $G(A,w)\ $(that is, $w_{1}\geq w_{4}$ and $w_{n}\geq w_{3}),$ implying
that the edges $1\rightarrow i$ and $i\rightarrow3$ are in $G(A,w)$ for any
$i\in\{4,\ldots,n\}.$

If $G(A\left[  \{1,2,3\}\right]  ,w\left[  \{1,2,3\}\right]  )$ is strongly
connected, the result follows from Lemma \ref{ls+2}. Suppose that $G(A\left[
\{1,2,3\}\right]  ,w\left[  \{1,2,3\}\right]  )$ is not strongly connected.

\medskip

Case 1) Suppose that $3\rightarrow2$ is in $G(A,w).$

If $2\rightarrow1$ is in $G(A,w),$ then the cycle
\[
1\rightarrow5\rightarrow\cdots\rightarrow n\rightarrow3\rightarrow
2\rightarrow1
\]
is in $G(A(4),w(4))$, implying that $G(A(4),w(4))$ is strongly connected.

Suppose that $2\rightarrow1$ is not in $G(A,w).$ Then, $1\rightarrow2$ is in
$G(A,w).$ Since the indegree of vertex $1$ is nonzero, we have that
$3\rightarrow1$ or $4\rightarrow1$ is in $G(A,w).$

Case 1.1) Suppose that $3\rightarrow1$ is in $G(A,w).$ Since the outdegree of
vertex $2$ is nonzero and $G(A\left[  \{1,2,3\}\right]  ,w\left[
\{1,2,3\}\right]  )$ is not strongly connected, the edge $2\rightarrow n$ is
in $G(A,w).$ Then,
\[
1\rightarrow5\rightarrow\cdots\rightarrow n\rightarrow3\rightarrow2\rightarrow
n\rightarrow3\rightarrow1
\]
is in $G(A(4),w(4)),$ implying that $G(A(4),w(4))$ is strongly connected.

Case 1.2) Suppose that $3\rightarrow1$ is not in $G(A,w)$ and $4\rightarrow1$
is in $G(A,w)$ (that is $w_{4}=w_{1}$)$.$ Since the only edge to vertex $1$ is
from vertex $4,$ it follows that there is $i\neq1$ such that $i\rightarrow4$
is in $G(A,w).$ This implies $i=2.$ Then,%
\[
1\rightarrow4\rightarrow\cdots\rightarrow n-1\rightarrow3\rightarrow
2\rightarrow4\rightarrow1
\]
is in $G(A(n),w(n))$, implying that $G(A(n),w(n))$ is strongly connected.

\medskip Case 2) Suppose that $3\rightarrow2$ is not in $G(A,w).$ Then
$2\rightarrow3$ is in $G(A,w).$ Since the outdegree of vertex $3$ is
nonzero$,$ at least one of the edges $3\rightarrow1$ or $3\rightarrow n$ is in
$G(A,w).$

Case 2.1) Suppose that $3\rightarrow1$ is in $G(A,w).$ Since $G(A\left[
\{1,2,3\}\right]  ,w\left[  \{1,2,3\}\right]  )$ is not strongly connected
then $1\rightarrow2$ is not an edge in $G(A,w)$ and, thus, $2\rightarrow1$ is
in $G(A,w).$ Since the indegree of vertex $2$ is nonzero, $4\rightarrow2$ is
in $G(A,w).$ Then,
\[
1\rightarrow4\rightarrow\cdots\rightarrow n-1\rightarrow3\rightarrow
1\rightarrow4\rightarrow2\rightarrow1
\]
is in $G(A(n),w(n))$, implying that $G(A(n),w(n))$ is strongly connected.

Case 2.2) Suppose that $3\rightarrow1$ is not in $G(A,w)$ and $3\rightarrow n$
is in $G(A,w)$ (that is $w_{3}=w_{n}$)$.$ Since the only edge from vertex $3$
is to vertex $n,$ it follows that there is $i\neq3$ such that $n\rightarrow i$
is in $G(A,w).$ This implies $i=2.$ Since the indegree of vertex $1$ is
nonzero and $2\rightarrow4$ is not in $G(A,w),$ we have that $2\rightarrow1$
is in $G(A,w)$ (see Case 1.2))$.$ Thus,%
\[
1\rightarrow5\rightarrow\cdots\rightarrow n\rightarrow3\rightarrow
n\rightarrow2\rightarrow1
\]
is in $G(A(4),w(4))$, implying that $G(A(4),w(4))$ is strongly connected.
\end{proof}

\bigskip

We note that Theorem \ref{ln5} does not hold if $n=4.$ For example, for the
matrix $C$ in Example \ref{expp}, $\left[
\begin{array}
[c]{cccc}%
3 & 2 & 1 & 2
\end{array}
\right]  ^{T}\in\mathcal{E}(C)$ but $\left[
\begin{array}
[c]{ccc}%
3 & 2 & 1
\end{array}
\right]  ^{T}\notin\mathcal{E}(C(4))$, as noticed in \cite{FJ2}.

\bigskip

As a consequence of Theorem \ref{ln5}, we next show that, for any vector $w$
efficient for $A,$ there is some $j\in\{4,\ldots,n\}$ such that
$w[\{1,2,3,j\}]$ is efficient for
\[
A\left[  \{1,2,3,j\}\right]  =\left[
\begin{tabular}
[c]{c|c}%
$B$ & $%
\begin{array}
[c]{c}%
1\\
1\\
1
\end{array}
$\\\hline
$%
\begin{array}
[c]{ccc}%
1 & 1 & 1
\end{array}
$ & $1$%
\end{tabular}
\right]  \in\mathcal{PC}_{4}.
\]

\begin{corollary}
\label{ct}Let $A\in\mathcal{PC}_{n}$ as in (\ref{BJ}), with $B\in
\mathcal{PC}_{3}$ and $n\geq4.$ Then,
\[
\mathcal{E}(A)=\cup_{j=4}^{n}\mathcal{E}(A,\{1,2,3,j\}).
\]

\end{corollary}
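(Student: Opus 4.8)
The plan is to prove the two cases $n=4$ and $n\geq 5$ separately, since Theorem \ref{ln5} is stated only for $n\geq 5$ (and indeed fails for $n=4$, as noted in the remark following it). The reverse inclusion $\supseteq$ is trivial in both cases: each $\mathcal{E}(A,\{1,2,3,j\})$ is by definition a subset of $\mathcal{E}(A)$. So the whole content is the forward inclusion $\mathcal{E}(A)\subseteq\cup_{j=4}^{n}\mathcal{E}(A,\{1,2,3,j\})$.

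When $n=4$ the claim reads $\mathcal{E}(A)=\mathcal{E}(A,\{1,2,3,4\})$. But $\{1,2,3,4\}=\{1,\ldots,n\}$, so $A[\{1,2,3,4\}]=A$ and the condition $w[\{1,2,3,4\}]\in\mathcal{E}(A[\{1,2,3,4\}])$ is just $w\in\mathcal{E}(A)$; hence the equality is immediate. For $n\geq 5$, I would start from Theorem \ref{ln5}, which gives $\mathcal{E}(A)=\cup_{i=4}^{n}\mathcal{E}(A;i)$. Take any $w\in\mathcal{E}(A)$; then $w\in\mathcal{E}(A;i)$ for some $i\in\{4,\ldots,n\}$, meaning $w(i)\in\mathcal{E}(A(i))$ and $A(i)=A_{n-1}(B)$ is again a $3$-block perturbed consistent matrix (of size $n-1\geq 4$). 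The natural move is to induct on $n$: applying the corollary to the smaller matrix $A(i)$ produces an index $j$ among the ``block'' coordinates of $A(i)$ for which the restriction of $w(i)$ to $\{1,2,3,j\}$ is efficient for the corresponding $4$-by-$4$ principal submatrix, and since deleting row/column $i$ does not disturb the submatrix indexed by $\{1,2,3,j\}$ (as $i\neq j$ and $i\geq 4$), this immediately gives $w[\{1,2,3,j\}]\in\mathcal{E}(A[\{1,2,3,j\}])$, i.e. $w\in\mathcal{E}(A,\{1,2,3,j\})$.

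I expect the main subtlety to be bookkeeping with the index sets rather than any deep obstacle: after deleting coordinate $i$ the remaining block coordinates of $A(i)$ are $\{4,\ldots,n\}\setminus\{i\}$, and one must check that the index $j$ the induction returns still lies in $\{4,\ldots,n\}$ and is distinct from $i$, so that $A[\{1,2,3,j\}]$ is genuinely a principal submatrix of the original $A$ and not affected by the deletion. The base case of the induction is the $n=4$ argument above, and the inductive step reduces the size by one while preserving the $3$-block structure, so the recursion is well-founded. Alternatively, and perhaps more cleanly, one can avoid an explicit induction by combining Theorem \ref{ln5} with a second application of Theorem \ref{cunion} (or Theorem \ref{ln5} itself) to the submatrix $A(i)$, peeling off block coordinates one at a time until only four coordinates $\{1,2,3,j\}$ remain; the key structural fact making this work is precisely that every proper principal submatrix $A(i)$ with $i\geq 4$ is itself of the form $A_{n-1}(B)$, so the reduction stays inside the class of $3$-block perturbed consistent matrices throughout. $\rule{0.5em}{0.5em}$
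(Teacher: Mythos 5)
Your proposal is correct and follows essentially the same route as the paper's own proof: induction on $n$ with the trivial base case $n=4$, using Theorem \ref{ln5} to find $i\in\{4,\ldots,n\}$ with $w(i)\in\mathcal{E}(A(i))$, applying the induction hypothesis to $A(i)=A_{n-1}(B)$, and translating the resulting index back to the original matrix (the paper makes this last step explicit via $j^{\prime}=j$ if $j<i$ and $j^{\prime}=j+1$ otherwise, which is exactly the bookkeeping you flag).
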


\begin{proof}
We prove the inclusion $\subseteq,$ as the other one is clear. The proof is by
induction on $n.$ If $n=4$ the result is clear. Suppose that $n>4.$ Let
$w\in\mathcal{E}(A).$ By Theorem \ref{ln5}, there is $i\in\{4,\ldots,n\}$ such
that $w\in\mathcal{E}(A;i).$ Then $w(i)\in\mathcal{E}(A(i)).$ By the induction
hypothesis, $\mathcal{E}(A(i))=\cup_{j=4}^{n-1}\mathcal{E}(A(i),\{1,2,3,j\}).$
Thus, $w(i)\in\mathcal{E}(A(i),\{1,2,3,j\})$ for some $j\in\{4,\ldots,n-1\},$
implying that $w\in\mathcal{E}(A,\{1,2,3,j^{\prime}\})$, in which $j^{\prime
}=j$ if $j<i,$ and $j^{\prime}=j+1$ if $j>i.$
\end{proof}

\bigskip

In Example \ref{exs3} we can see that the sets $\mathcal{E}(A,\{1,2,3,j\})$
may be different for distinct $j$'s in $\{4,\ldots,n\}.$

\bigskip Taking into account Lemma \ref{lpermut}, it follows from Corollary
\ref{ct} that $\mathcal{E}(A)$ is the set of vectors obtained from vectors in
$\mathcal{E}(A,\{1,2,3,4\})$ by permuting the last $n-3$ entries.

\begin{corollary}
\label{ct2}Let $A\in\mathcal{PC}_{n}$ as in (\ref{BJ}), with $B\in
\mathcal{PC}_{3}$ and $n\geq4.$ Then,
\begin{align*}
\mathcal{E}(A)  &  =\left\{  (I_{3}\oplus P)w:w\in\mathcal{E}%
(A,\{1,2,3,4\})\text{ and}\right. \\
&  \left.  P\text{ is an }(n-3)\text{-by-}(n-3)\text{ permutation
matrix}\right\}  .
\end{align*}

\end{corollary}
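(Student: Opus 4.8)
The plan is to derive Corollary~\ref{ct2} directly from Corollary~\ref{ct} together with Lemma~\ref{lpermut}, treating the two as the two inclusions of a set equality. Corollary~\ref{ct} already tells us that $\mathcal{E}(A)=\bigcup_{j=4}^{n}\mathcal{E}(A,\{1,2,3,j\})$, so the content of this corollary is really the observation that each of the pieces $\mathcal{E}(A,\{1,2,3,j\})$ for $j\geq 5$ is just a permuted copy of the piece $\mathcal{E}(A,\{1,2,3,4\})$, where the permutation acts only on the last $n-3$ coordinates and fixes the first three. I would phrase the whole argument as: show the right-hand set is contained in $\mathcal{E}(A)$, and conversely show every efficient vector lies in the right-hand set.

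\textbf{Key steps.} First I would check the easy inclusion $\supseteq$. Take $w\in\mathcal{E}(A,\{1,2,3,4\})\subseteq\mathcal{E}(A)$ and an arbitrary $(n-3)$-by-$(n-3)$ permutation matrix $P$. Since $I_3\oplus P$ has the block-diagonal form $I_s\oplus Q$ with $s=3$ and $Q=P$, Lemma~\ref{lpermut} applies verbatim and gives $(I_3\oplus P)w\in\mathcal{E}(A)$; hence the right-hand set sits inside $\mathcal{E}(A)$. Second, for the inclusion $\subseteq$, I would start from an arbitrary $w\in\mathcal{E}(A)$ and invoke Corollary~\ref{ct} to get some $j\in\{4,\ldots,n\}$ with $w\in\mathcal{E}(A,\{1,2,3,j\})$. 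The remaining task is to produce a permutation $P$ of the last $n-3$ coordinates, and a vector $u\in\mathcal{E}(A,\{1,2,3,4\})$, such that $w=(I_3\oplus P)u$. I would let $Q$ be the transposition that swaps the index $4$ and the index $j$ (the identity if $j=4$), set $P=Q$, and define $u=(I_3\oplus Q)w=(I_3\oplus P)w$, noting $Q=Q^{-1}$ so $w=(I_3\oplus P)u$. By Lemma~\ref{lpermut}, $u\in\mathcal{E}(A)$; and because $Q$ merely relabels positions $4$ and $j$ among the last $n-3$ entries while leaving entries $1,2,3$ fixed, we have $u[\{1,2,3,4\}]$ equal to $w[\{1,2,3,j\}]$ up to the relevant reindexing of the four selected coordinates. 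Since $w[\{1,2,3,j\}]\in\mathcal{E}(A[\{1,2,3,j\}])$ and $A[\{1,2,3,4\}]=A[\{1,2,3,j\}]$ (both equal the $4$-by-$4$ matrix with block $B$ and all remaining entries $1$), this yields $u[\{1,2,3,4\}]\in\mathcal{E}(A[\{1,2,3,4\}])$, so $u\in\mathcal{E}(A,\{1,2,3,4\})$, which completes the inclusion.

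\textbf{Main obstacle.} The only delicate point is bookkeeping in the second step: one must verify carefully that conjugating $w$ by $I_3\oplus Q$ sends the efficiency condition on the submatrix indexed by $\{1,2,3,j\}$ to the efficiency condition on the submatrix indexed by $\{1,2,3,4\}$, rather than merely permuting the ambient efficient vector. This hinges on the fact that all the principal submatrices $A[\{1,2,3,j\}]$ with $j\geq 4$ coincide (they are identical $4$-by-$4$ matrices, since every entry outside $B$ equals $1$), so the transposition $Q$ is in fact a symmetry of the relevant submatrices and no genuine structural change occurs. Once this invariance of $A[\{1,2,3,j\}]$ in $j$ is noted, the argument is a routine application of Lemma~\ref{lpermut}, and I would keep the writeup short, emphasizing the symmetry of the submatrices as the reason the permutation can be chosen to be a single transposition.
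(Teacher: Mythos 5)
Your proposal is correct and follows essentially the same route as the paper's own proof: the inclusion $\supseteq$ via Lemma \ref{lpermut}, and the inclusion $\subseteq$ by combining Corollary \ref{ct} with a transposition interchanging the $4$th and $j$th entries, justified by the observation that $A[\{1,2,3,j\}]=A[\{1,2,3,4\}]$ for every $j\in\{4,\ldots,n\}$. The paper phrases the permutation step through Lemma \ref{lsim} rather than Lemma \ref{lpermut}, but since Lemma \ref{lpermut} is itself an instance of Lemma \ref{lsim}, this is the same argument.
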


\begin{proof}
Since $\mathcal{E}(A)\supseteq\mathcal{E}(A,\{1,2,3,4\})$, the inclusion
$\supseteq$ follows by Lemma \ref{lpermut}. Now we show the other inclusion.
Let $v\in\mathcal{E}(A).$ By Corollary \ref{ct}, there is $j\in\{4,\ldots,n\}$
such that $v[\{1,2,3,j\}]\in\mathcal{E(}A[\{1,2,3,j\}]).$ Let $P$ be an
$(n-3)$-by-$(n-3)$ permutation matrix such that $v$ and $w:=(I_{3}\oplus
P^{T})v$ have the $4$th and $j$th entries interchanged. By Lemma \ref{lsim},
$w$ is efficient for $(I_{3}\oplus P^{T})A(I_{3}\oplus P)=A.$ On the other
hand, $w[\{1,2,3,4\}]=v[\{1,2,3,j\}]$ is efficient for
$A[\{1,2,3,4\}]=A[\{1,2,3,j\}].$ Thus, $w\in\mathcal{E}(A,\{1,2,3,4\}).$ Since
$v=(I_{3}\oplus P)w,$ the claim follows.
\end{proof}

\bigskip

To construct $\mathcal{E}(A),$ with $A$ as in (\ref{BJ}), we first obtain the
set of efficient vectors for $A[\{1,2,3,4\}]\in\mathcal{PC}_{4},$ as described
in \cite{FJ2}. Then, we use Theorem \ref{lcompl} to obtain $\mathcal{E}%
(A,\{1,2,3,4\}).$ Finally, we apply Corollary \ref{ct2} to get $\mathcal{E}%
(A).$ We illustrate this procedure in the next example. (We could also
construct $\mathcal{E}(A)$ using Corollary \ref{ct} instead of Corollary
\ref{ct2}.)

\begin{example}
\label{exs3}Consider%
\[
A=\left[
\begin{array}
[c]{cccccc}%
1 & 2 & 3 & 1 & 1 & 1\\
\frac{1}{2} & 1 & \frac{1}{2} & 1 & 1 & 1\\
\frac{1}{3} & 2 & 1 & 1 & 1 & 1\\
1 & 1 & 1 & 1 & 1 & 1\\
1 & 1 & 1 & 1 & 1 & 1\\
1 & 1 & 1 & 1 & 1 & 1
\end{array}
\right]  \in\mathcal{PC}_{6}.
\]
Note that $A[\{1,2,3,4\}]$ is the matrix (\ref{CC}) in Example \ref{expp}. The
set of efficient vectors for this matrix was obtained in \cite[Section 5]%
{FJ2}. Write $w=\left[
\begin{array}
[c]{ccc}%
w_{1} & \cdots & w_{6}%
\end{array}
\right]  ^{T}.$ By Theorem \ref{lcompl},%
\begin{align*}
\mathcal{E}(A,\{1,2,3,4\})  &  =\left\{  w\in\mathcal{V}_{6}:\left[
\begin{array}
[c]{cccc}%
w_{1} & w_{2} & w_{3} & w_{4}%
\end{array}
\right]  ^{T}\in\mathcal{E}(A[\{1,2,3,4\}])\text{ and}\right. \\
&  \left.  \min\left\{  w_{1},w_{2},w_{3},w_{4}\right\}  \leq w_{5},\text{
}w_{6}\leq\max\left\{  w_{1},w_{2},w_{3},w_{4}\right\}  \right\}  .
\end{align*}
By Corollary \ref{ct2}, $\mathcal{E}(A)$ is the set of vectors obtained from
the vectors in $\mathcal{E}(A,\{1,2,3,4\})$ by considering all the
permutations of the last $n-3$ entries.

For example, $\left[
\begin{array}
[c]{cccc}%
13 & 8 & 7 & 12
\end{array}
\right]  ^{T}\in\mathcal{E}(A,\{1,2,3,4\})$ (see \cite[Section 5]{FJ2})$.$
Thus,
\begin{align*}
\left\{  \left[
\begin{array}
[c]{cccccc}%
13 & 8 & 7 & 12 & w_{5} & w_{6}%
\end{array}
\right]  ^{T}:7\leq w_{5},\text{ }w_{6}\leq13\right\}   &  \subseteq
\mathcal{E}(A),\\
\left\{  \left[
\begin{array}
[c]{cccccc}%
13 & 8 & 7 & w_{4} & 12 & w_{6}%
\end{array}
\right]  ^{T}:7\leq w_{4},\text{ }w_{6}\leq13\right\}   &  \subseteq
\mathcal{E}(A),\\
\left\{  \left[
\begin{array}
[c]{cccccc}%
13 & 8 & 7 & w_{4} & w_{5} & 12
\end{array}
\right]  ^{T}:7\leq w_{4},\text{ }w_{5}\leq13\right\}   &  \subseteq
\mathcal{E}(A).
\end{align*}

Regarding Corollary \ref{ct}, we notice that the sets $\mathcal{E}%
(A,\{1,2,3,j\})$ are different for distinct $j$'s in $\{4,5,6\}.$ To see this,
let
\[
u=\left[
\begin{array}
[c]{cccccc}%
13 & 8 & 7 & 12 & 7 & 7
\end{array}
\right]  ^{T}\text{ and }v=\left[
\begin{array}
[c]{cccccc}%
13 & 8 & 7 & 7 & 12 & 7
\end{array}
\right]  ^{T}.
\]
We have $u\in\mathcal{E}(A,\{1,2,3,4\}),$ but $u\notin\mathcal{E}%
(A,\{1,2,3,j\})$ for $j=5,6.$ Also, $v\in\mathcal{E}(A,\{1,2,3,5\})$ but
$v\notin\mathcal{E}(A,\{1,2,3,j\})$, for $j=4,6.$ We observe, in addition,
that $u[\{1,2,3\}],$ $v[\{1,2,3\}]\notin\mathcal{E}(A[\{1,2,3\}])$.

\medskip We finally note that, if Theorem \ref{cunion} was used to compute
$\mathcal{E}(A),$ we would apply the following more involved equality:%
\[
\mathcal{E}(A)=\cup_{i=1}^{6}\mathcal{E}(A;i).
\]

\end{example}

\bigskip Next we give an example showing that an analog of Theorem \ref{ln5}
(and of Corollaries \ref{ct} and \ref{ct2}) does not hold if $B\in
\mathcal{PC}_{4}$.

\begin{example}
\label{ex20}Let%
\[
A=\left[
\begin{array}
[c]{cccccc}%
1 & 5 & 2 & 3 & 1 & 1\\
\frac{1}{5} & 1 & \frac{1}{2} & 3 & 1 & 1\\
\frac{1}{2} & 2 & 1 & 2 & 1 & 1\\
\frac{1}{3} & \frac{1}{3} & \frac{1}{2} & 1 & 1 & 1\\
1 & 1 & 1 & 1 & 1 & 1\\
1 & 1 & 1 & 1 & 1 & 1
\end{array}
\right]  .
\]
The vector%
\[
w=\left[
\begin{array}
[c]{cccccc}%
8 & 2 & 3 & 4 & 6 & 2
\end{array}
\right]  ^{T}%
\]
is efficient for $A.$ However, $w(i)$ is efficient for $A(i)$ if and only if
$i=3,4.$

\smallskip We note, however, that the vector
\[
v=\left[
\begin{array}
[c]{cccccc}%
8 & 2 & 6 & 4 & 6 & 2
\end{array}
\right]  ^{T}%
\]
is efficient for $A$ and $w(i)$ is efficient for $A(i)$ with $i=3,4,6.$

\medskip By Theorem \ref{cunion}, $\mathcal{E}(A)=\cup_{i=1}^{6}%
\mathcal{E}(A;i).$ For $i=1,2,3,4,$ Corollary \ref{ct2} (or Corollary
\ref{ct}) can be applied to obtain $\mathcal{E}(A(i)),$ and then Theorem
\ref{thext} to get $\mathcal{E}(A;i).$ For $i=5,6,$ Theorem \ref{cunion} can
be applied to get $\mathcal{E}(A(i))$ (recursively), and then Theorem
\ref{thext} to get $\mathcal{E}(A;i).$
\end{example}

Though Theorem \ref{ln5} does not hold for $s=4,$ Example \ref{ex21} shows
that we may have $w\in\mathcal{E}(A)$ and $w(i)\in\mathcal{E}(A(i))$ only if
$i>4.$

\subsection{Efficiency of the Perron eigenvector}

Here we give sufficient conditions for the Perron eigenvector of a $3$-block
perturbed consistent matrix $A$ as in (\ref{BJ}) to be efficient. We also give
examples of such matrices for which the Perron eigenvector is not efficient.

With a possible permutation similarity on $A$ via an $n$-by-$n$ permutation
matrix $Q\oplus I_{n-3},$ with $Q$ of size $3$-by-$3$, and taking into account
Lemma \ref{lsim}, we may, and do, assume that the entry in position $1,3$ of
$A$ is greater than or equal to $1.$

\begin{theorem}
\label{thPerron3block}Let $A\in\mathcal{PC}_{n}$ as in (\ref{BJ}), with
$n\geq4$,
\begin{equation}
B=\left[
\begin{array}
[c]{ccc}%
1 & a_{12} & a_{13}\\
\frac{1}{a_{12}} & 1 & a_{23}\\
\frac{1}{a_{13}} & \frac{1}{a_{23}} & 1
\end{array}
\right]  \in\mathcal{PC}_{3}, \label{BB}%
\end{equation}
and $a_{13}\geq1.$ Let $q=a_{13}-a_{23}a_{12}.$ The Perron eigenvector of $A$
is efficient if one of the following conditions hold:

\begin{enumerate}
\item $a_{12}\geq1,$ $a_{23}\geq1$ and $q\leq0;$

\item $a_{12}\geq1,$ $a_{23}\leq1$ and $q\geq0;$

\item $a_{12}\leq1,$ $a_{23}\geq1$ and $q\geq0.$
\end{enumerate}
\end{theorem}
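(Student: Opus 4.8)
The plan is to reduce the efficiency question to a concrete check on the graph $G(A,x)$, where $x$ is the Perron eigenvector, by exploiting the reductions already available in the excerpt. By Lemma \ref{lsubmatrix}, the Perron eigenvector $x$ is efficient for $A$ if and only if $x[\{1,2,3,4\}]$ is efficient for $A[\{1,2,3,4\}]\in\mathcal{PC}_4$; and by Lemma \ref{leigenvstructure} the last $n-3$ entries of $x$ are all equal (to some common value, say $t$), so $x[\{1,2,3,4\}]=(x_1,x_2,x_3,t)^T$. Thus the whole problem collapses to a $4$-by-$4$ question about a $3$-block perturbed consistent matrix of order $4$, namely whether $G(A[\{1,2,3,4\}],x[\{1,2,3,4\}])$ is strongly connected. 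First I would set up this reduction explicitly, writing $A[\{1,2,3,4\}]$ in the form \eqref{BB}-bordered-by-ones and naming the four relevant entries $x_1,x_2,x_3,t$.

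Next I would determine the ordering relations among $x_1,x_2,x_3$ forced by the Perron equation. Writing out $Bx[\{1,2,3\}]+tJ = \lambda x[\{1,2,3\}]$ from the first block of \eqref{syst1}, I would extract the pairwise comparisons $x_i/x_j$ versus $a_{ij}$ that determine which edges among $\{1,2,3\}$ appear in $G(A,x)$. The key structural fact to establish is the direction of the internal edges of the triangle on $\{1,2,3\}$ under each of the three sign/magnitude hypotheses on $a_{12},a_{23},q$ (recall $q=a_{13}-a_{23}a_{12}$ measures the deviation from consistency, since $B$ is consistent exactly when $a_{13}=a_{12}a_{23}$). For instance, in Case 1 ($a_{12}\ge1$, $a_{23}\ge1$, $q\le0$) I expect to show the triangle induces a directed path $1\to2\to3$ together with the ``shortcut'' $1\to3$, and I would verify the analogous patterns in Cases 2 and 3. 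These follow from sign analysis of the eigenvector equation combined with the hypotheses; the positivity of $\lambda$ and of all entries of $x$ keeps the inequalities tractable.

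Having pinned down the triangle edges, I would connect vertex $4$ (the block vertex, with all its $A$-entries equal to $1$) to the triangle. Since $a_{4j}=a_{j4}=1$ for $j=1,2,3$, the edge $4\to j$ is present iff $t\ge x_j$ and $j\to4$ iff $x_j\ge t$, so vertex $4$ always has at least one edge in each direction to \emph{some} triangle vertex, and I must check that these hook into the triangle's directed structure to close up a spanning strongly connected digraph. The plan is to exhibit, in each case, a Hamiltonian-type cycle through $\{1,2,3,4\}$ (or at least verify reachability both ways between $4$ and the triangle), which by Theorem \ref{blanq} yields efficiency of $x[\{1,2,3,4\}]$ and hence, via Lemma \ref{lsubmatrix}, of $x$ itself.

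The main obstacle I anticipate is controlling the position of $t$ relative to $x_1,x_2,x_3$: unlike the triangle edges, which are governed cleanly by the eigenvector equation on the perturbed block, the value $t$ comes from the coupling between the two blocks and is not immediately comparable to the $x_i$. I would handle this by returning to the full system \eqref{syst1}: the second block gives $y_{\operatorname*{sum}}+ (n-s)t = \lambda t$, which pins $t$ as a weighted combination of $x_1,x_2,x_3$ and should force $\min_i x_i \le t \le \max_i x_i$ (so that vertex $4$ is ``between'' the extreme triangle vertices), exactly the sandwiching that Theorem \ref{lcompl} would want. Establishing this sandwich inequality rigorously, and then checking that the resulting edges to vertex $4$ complete the cycle in each of the three cases, is where the real work lies; the three cases will likely require slightly different cycles but the same underlying mechanism.
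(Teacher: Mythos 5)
Your opening reduction is exactly the paper's: Lemma \ref{leigenvstructure} plus Lemma \ref{lsubmatrix} collapse the problem to showing that $G(A',w')$ is strongly connected for $A'=A[\{1,2,3,4\}]$, $w'=(x_1,x_2,x_3,t)^T$. But the core of your plan has a genuine gap, and your guessed edge pattern is backwards. Under Condition 1 the eigenvector equation forces $a_{12}x_2\ge x_1$ and $a_{23}x_3\ge x_2$, i.e.\ the directed path $3\rightarrow2\rightarrow1$, not $1\rightarrow2\rightarrow3$ with shortcut $1\rightarrow3$. So the whole difficulty is to return from vertex $1$ to vertex $3$, and for that you need the two \emph{specific} comparisons $x_1\ge t$ and $t\ge x_3$ (edges $1\rightarrow4$ and $4\rightarrow3$). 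The sandwich $\min_i x_i\le t\le\max_i x_i$ cannot deliver these, because under Condition 1 the entry $x_1$ need not be the largest of $x_1,x_2,x_3$: take $a_{12}=1$, $a_{23}=5$, $a_{13}=1$ (so $q=-4\le0$); then row $2$ of $A$ dominates row $1$ entrywise, hence $x_2>x_1$, and your sandwich only gives $x_3\le t\le x_2$. Together with the triangle path $3\rightarrow2\rightarrow1$ (strict, so $1\rightarrow2$ is absent), these facts are consistent with vertex $1$ having no outgoing edge at all (e.g.\ $x_1=1$, $x_3=1.5$, $t=1.6$, $x_2=2$ satisfies every inequality you would have established), so strong connectivity does not follow from them. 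Moreover, even the lower half of the sandwich, $t\ge\min_i x_i$, does not follow from $t=(x_1+x_2+x_3)/(\lambda-(n-3))$ alone, since nothing in your argument bounds $\lambda$ from above in terms of the entries.

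The missing device, which is how the paper proves exactly the inequalities you cannot reach, is to do the sign analysis on well-chosen \emph{linear combinations} of the rows $r_i$ of $(A-\lambda I_n)w=0$, namely $r_1-r_4$, $r_4-r_3$, $r_2-r_4$, $r_1-a_{12}r_2$, $r_2-a_{23}r_3$, $r_1-a_{13}r_3$. Each is an identity with exactly three summands; for instance $r_1-r_4=0$ reads
\[
\lambda(w_4-w_1)+(a_{12}-1)w_2+(a_{13}-1)w_3=0,
\]
so whenever the case hypotheses fix the signs of two summands, the sign of the third is forced, and that sign is precisely an edge statement for $G(A',w')$. In Case 1 the combinations $r_1-r_4$, $r_4-r_3$, $r_1-a_{12}r_2$, $r_2-a_{23}r_3$ give $w_1\ge w_4$, $w_4\ge w_3$, $a_{12}w_2\ge w_1$, $a_{23}w_3\ge w_2$, i.e.\ the Hamiltonian cycle $1\rightarrow4\rightarrow3\rightarrow2\rightarrow1$; Cases 2 and 3 produce the cycles $1\rightarrow4\rightarrow2\rightarrow3\rightarrow1$ and $1\rightarrow2\rightarrow4\rightarrow3\rightarrow1$ from the analogous combinations. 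This yields the comparisons between $t$ and the individual entries $x_1,x_3$ directly, with no need for the sandwich or for determining which of $x_1,x_2,x_3$ is extremal. If you replace your vertex-$4$ step by these row combinations, your outline becomes the paper's proof; as written, it does not close.
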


\begin{proof}
Let $\lambda$ be the Perron eigenvalue of $A$ and $w$ be the Perron
eigenvector. Notice that $w$ is positive and, by Lemma \ref{leigenvstructure},
its last $n-3$ entries are equal. By Lemma \ref{lsubmatrix}, it is enough to
show that $w^{\prime}=w[\{1,2,3,4\}]$ is efficient for $A^{\prime
}=A[\{1,2,3,4\}],$ which is equivalent to show that the digraph $G(A^{\prime
},w^{\prime})$ is strongly connected, by Theorem \ref{blanq}. Recall that
$G(A^{\prime},w^{\prime})$ is the subgraph of $G(A,w)$ induced by the set of
vertices $\{1,2,3,4\}$ and $i\rightarrow j$ is an edge in $G(A,w)$ if and only
if $\frac{w_{i}}{w_{j}}\geq a_{ij}.$

Denote by $r_{i}$ the $i$-th row of $(A-\lambda I_{n})w.$ Since $(A-\lambda
I_{n})w=0,$ we have%
\begin{align*}
r_{1}-r_{4}  &  =0\text{, }r_{4}-r_{3}=0\text{, }r_{2}-r_{4}=0\\
r_{1}-a_{12}r_{2}  &  =0,\text{ }r_{2}-a_{23}r_{3}=0\text{, }r_{1}-a_{13}%
r_{3}=0.
\end{align*}
Letting $w=\left[
\begin{array}
[c]{cccccc}%
w_{1} & w_{2} & w_{3} & w_{4} & \cdots & w_{4}%
\end{array}
\right]  ^{T},$ an easy computation shows that the previous equalities are
equivalent to, respectively,%
\begin{equation}
\lambda(w_{4}-w_{1})+(a_{12}-1)w_{2}+(a_{13}-1)w_{3}=0, \label{e1}%
\end{equation}%
\begin{equation}
\lambda(w_{3}-w_{4})+(1-\frac{1}{a_{13}})w_{1}+(1-\frac{1}{a_{23}})w_{2}=0,
\label{e2}%
\end{equation}%
\begin{equation}
\lambda(w_{4}-w_{2})+\left(  \frac{1}{a_{12}}-1\right)  w_{1}+(a_{23}%
-1)w_{3}=0, \label{e6}%
\end{equation}%
\begin{equation}
\lambda(a_{12}w_{2}-w_{1})+(a_{13}-a_{23}a_{12})w_{3}+(1-a_{12})(n-3)w_{4}=0,
\label{e3}%
\end{equation}%
\begin{equation}
\lambda(a_{23}w_{3}-w_{2})+\left(  \frac{1}{a_{12}}-\frac{a_{23}}{a_{13}%
}\right)  w_{1}+(1-a_{23})(n-3)w_{4}=0, \label{e4}%
\end{equation}%
\begin{equation}
\lambda(a_{13}w_{3}-w_{1})+\left(  a_{12}-\frac{a_{13}}{a_{23}}\right)
w_{2}+(1-a_{13})(n-3)w_{4}=0. \label{e5}%
\end{equation}
Taking into account that, for each of the equalities above to hold, if two
summands in the left hand side of the equality are both nonnegative (or both
nonpositive) the other summand is nonpositive (nonnegative), we conclude the following.

If Condition 1. in the statement holds, it follows from (\ref{e1}),
(\ref{e2}), (\ref{e3}) and (\ref{e4}) that $w_{1}\geq w_{4}$, $w_{4}\geq
w_{3},$ $\frac{w_{2}}{w_{1}}\geq\frac{1}{a_{12}}$ and $\frac{w_{3}}{w_{2}}%
\geq\frac{1}{a_{23}}.$ Thus, the cycle%
\[
1\rightarrow4\rightarrow3\rightarrow2\rightarrow1
\]
is in $G(A^{\prime},w^{\prime}).$

If Condition 2. in the statement holds, it follows from (\ref{e1}),
(\ref{e6}), (\ref{e4}) and (\ref{e5}) that the cycle%
\[
1\rightarrow4\rightarrow2\rightarrow3\rightarrow1
\]
is in $G(A^{\prime},w^{\prime}).$

If Condition 3. in the statement holds, it follows from (\ref{e2}),
(\ref{e6}), (\ref{e3}) and (\ref{e5}) that the cycle%
\[
1\rightarrow2\rightarrow4\rightarrow3\rightarrow1
\]
is in $G(A^{\prime},w^{\prime}).$

In all cases, $G(A^{\prime},w^{\prime})$ is strongly connected.
\end{proof}

\bigskip

We observe that, if $a_{12}=1$ or $a_{23}=1,$ then Theorem
\ref{thPerron3block} gives the known result that the Perron eigenvector is
efficient for $A$ (see \cite{p6, p2, CFF, Fu22}).

\bigskip

In Table \ref{tab1}, we show with examples that, when $a_{12},$ $a_{23}$ and
$q$ do not satisfy any of the conditions $1.$ - $3.$ in Theorem
\ref{thPerron3block}, the Perron eigenvector may be efficient or not. Note
that, since we are assuming $a_{13}\geq1,$ the case $a_{12}\leq1$ and
$a_{23}\leq1,$ with at least one inequality strict, implies $q>0.$%

\begin{table}[] \centering
$%
\begin{tabular}
[c]{|c|c|c|c|c|c|}\hline
$n=6$ & $a_{12}$ & $a_{13}$ & $a_{23}$ & efficiency & cycle in $G(A^{\prime
},w^{\prime})$\\\hline\hline
$%
\begin{array}
[c]{ccc}%
a_{12}\geq1, & a_{23}\geq1, & q>0
\end{array}
$ & $2$ & $8.5$ & $2$ & no & \\\cline{2-6}
& $2$ & $8$ & $2$ & yes & $1\rightarrow4\rightarrow3\rightarrow2\rightarrow
1$\\\hline\hline
$%
\begin{array}
[c]{ccc}%
a_{12}\geq1, & a_{23}\leq1, & q<0
\end{array}
$ & $100$ & $5.9$ & $0.1$ & no & \\\cline{2-6}
& $90$ & $5.9$ & $0.1$ & yes & $1\rightarrow4\rightarrow2\rightarrow
3\rightarrow1$\\\hline\hline
$%
\begin{array}
[c]{ccc}%
a_{12}\leq1, & a_{23}\geq1, & q<0
\end{array}
$ & $0.1$ & $5.9$ & $140$ & no & \\\cline{2-6}
& $0.1$ & $5.9$ & $130$ & yes & $1\rightarrow2\rightarrow4\rightarrow
3\rightarrow1$\\\hline\hline
$%
\begin{array}
[c]{ccc}%
a_{12}\leq1, & a_{23}\leq1, & q>0
\end{array}
$ & $0.5$ & $8$ & $0.4$ & no & \\\cline{2-6}
& $0.5$ & $9$ & $0.4$ & yes & $1\rightarrow2\rightarrow4\rightarrow
3\rightarrow1$\\\hline
\end{tabular}
\ \ \ $%
\caption{\small{Efficiency (or not) of the Perron eigenvector of a matrix as in (\ref{BJ}), with $n=6$ and  $B$ as in (\ref{BB}), when conditions 1. -  3. in Theorem \ref{thPerron3block} do not hold.}}\label{tab1}%
\end{table}%
$\ $

\section{Constant type s-block perturbed consistent matrices \label{s7}}

\bigskip Let $x>0.$ We denote by $C_{n}(x)$ the reciprocal matrix $[a_{ij}%
]\in\mathcal{PC}_{n},$ with $a_{ij}=x$, $j>i.$ For example,%
\[
C_{4}(x)=\left[
\begin{array}
[c]{cccc}%
1 & x & x & x\\
\frac{1}{x} & 1 & x & x\\
\frac{1}{x} & \frac{1}{x} & 1 & x\\
\frac{1}{x} & \frac{1}{x} & \frac{1}{x} & 1
\end{array}
\right]  .
\]

Here we give a class of efficient vectors for an $s$-block perturbed matrix
$A_{n}(B)$ as in (\ref{BJ}), with $B=C_{s}(x)$. We first obtain a class of
efficient vectors for $C_{s}(x)$ formed by vectors whose appropriate
subvectors are efficient for all leading principal submatrices of $C_{s}(x).$
Using similar ideas, other classes could be constructed, namely the one formed
by vectors whose appropriate subvectors are efficient for all trailing
principal submatrices of $C_{s}(x).$ Then we illustrate how to obtain
efficient vectors for $A_{n}(C_{s}(x)),$ $n>s,$ from efficient vectors for
$C_{s}(x),$ using Theorem \ref{lcompl}.

Finally we show that the Perron eigenvector of $A_{n}(C_{s}(x)),$ with $n\geq
s,$ is efficient.

We assume $x\geq1,$ as $A_{n}(C_{s}(x))$ is permutationally similar to
$A_{n}(C_{s}(\frac{1}{x}))$ and, thus, taking into account Lemma \ref{lsim},
for our purposes we may consider $A_{n}(C_{s}(\frac{1}{x}))$ instead of
$A_{n}(C_{s}(x)).$ We notice that, for completeness, we include the case
$x=1,$ though in this case $A_{n}(C_{s}(x))$ is consistent and the results are obvious.

\subsection{Classes of efficient vectors}

\begin{theorem}
\label{effCn}Let $x\geq1,$ $n\geq3$ and $w=\left[
\begin{array}
[c]{ccc}%
w_{1} & \cdots & w_{n}%
\end{array}
\right]  ^{T}$. If
\begin{equation}
w_{3}\leq\frac{w_{1}}{x}\leq w_{2}\leq xw_{3} \label{v3by3}%
\end{equation}
and%
\begin{equation}
\frac{1}{x}\min\left\{  w_{3},\ldots,w_{i-1}\right\}  \leq w_{i}\leq
\frac{w_{1}}{x}\text{, }i=4,\ldots,n, \label{vectextended}%
\end{equation}
then $w$ is efficient for $C_{n}(x).$
\end{theorem}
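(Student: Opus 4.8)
The plan is to prove the statement by induction on $n$, using the extension criterion of Theorem \ref{thext} with the deleted index always taken to be $n$. The key structural observation is that deleting the last row and column of $C_n(x)$ returns precisely the leading principal submatrix $C_{n-1}(x)$, i.e. $C_n(x)(n)=C_{n-1}(x)$, and moreover every superdiagonal entry of $C_n(x)$ equals $x$, so $a_{in}=x$ for all $i<n$.

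For the base case $n=3$ I would simply invoke Corollary \ref{c3por3}. Since $C_3(x)$ has $a_{12}=a_{13}=a_{23}=x$, we have $\frac{a_{13}}{a_{12}}=1$, so the second alternative in that corollary reads $xw_3\geq w_2\geq\frac{w_1}{x}\geq w_3$, which rewritten as a chain is exactly condition (\ref{v3by3}). Thus (\ref{v3by3}) already forces $w$ efficient for $C_3(x)$, and here (\ref{vectextended}) is vacuous.

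For the inductive step, assume the result for $n-1$ and let $w$ satisfy (\ref{v3by3}) and (\ref{vectextended}). The subvector $w(n)=w[\{1,\ldots,n-1\}]$ satisfies the same two conditions with $n$ replaced by $n-1$: condition (\ref{v3by3}) is unchanged, and (\ref{vectextended}) for $i=4,\ldots,n-1$ is a subfamily of the hypotheses. Hence, by the induction hypothesis, $w(n)\in\mathcal{E}(C_{n-1}(x))=\mathcal{E}(A(n))$. I would then apply Theorem \ref{thext} with $k=n$. Because $a_{in}=x$ for all $i<n$, the extension inequalities $\min_{1\leq i\leq n-1}\frac{w_i}{a_{in}}\leq w_n\leq\max_{1\leq i\leq n-1}\frac{w_i}{a_{in}}$ collapse to $\frac{1}{x}\min_{1\leq i\leq n-1}w_i\leq w_n\leq\frac{1}{x}\max_{1\leq i\leq n-1}w_i$.

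It remains to verify these two inequalities from the hypotheses. The upper bound follows from (\ref{vectextended}) with $i=n$, which gives $w_n\leq\frac{w_1}{x}\leq\frac{1}{x}\max_{1\leq i\leq n-1}w_i$, since $w_1$ is among $w_1,\ldots,w_{n-1}$. For the lower bound, (\ref{vectextended}) with $i=n$ gives $w_n\geq\frac{1}{x}\min\{w_3,\ldots,w_{n-1}\}$, and enlarging the index set used in the minimum to include $1$ and $2$ only decreases it, so $\min\{w_3,\ldots,w_{n-1}\}\geq\min_{1\leq i\leq n-1}w_i$ and hence $w_n\geq\frac{1}{x}\min_{1\leq i\leq n-1}w_i$. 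Both inequalities therefore hold, and Theorem \ref{thext} yields $w\in\mathcal{E}(C_n(x))$. The only step demanding care is this last comparison of minima, since (\ref{vectextended}) only bounds $w_i$ below by $\frac{1}{x}\min\{w_3,\ldots,w_{i-1}\}$ rather than by the full minimum; confirming that the larger index set produces the smaller (hence dominated) quantity is what makes the extension inequality go through. This is the main, though modest, obstacle; everything else is routine substitution into Theorem \ref{thext}.
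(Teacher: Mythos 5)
Your proposal is correct and follows essentially the same route as the paper's own proof: induction on $n$, with the base case $n=3$ handled by Corollary \ref{c3por3} (condition (\ref{v3by3}) being exactly the relevant alternative there) and the inductive step by applying Theorem \ref{thext} with $k=n$, using $C_n(x)(n)=C_{n-1}(x)$ and $a_{in}=x$. The only difference is that you spell out the min/max verification (including the comparison of minima over the enlarged index set) that the paper compresses into ``which follows from (\ref{vectextended}).''
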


\begin{proof}
Let $B=C_{n}(x).$ The proof is by induction on $n.$ By Corollary \ref{c3por3},
$w[\{1,2,3\}]\in\mathcal{E}(B[\{1,2,3])$ if and only if (\ref{v3by3}) holds.
Thus, the claim follows if $n=3.$ Now suppose that $n\geq4$ and conditions
(\ref{v3by3}) and (\ref{vectextended}) hold. By the induction hypothesis (note
that, in particular, the inequality in (\ref{vectextended}) holds for
$i=4,\ldots,n-1$), $w(n)$ is efficient for $B(n)=C_{n-1}(x).$ By Theorem
\ref{thext}, $w$ is efficient for $B$ if (and only if)%
\[
\min\left\{  \frac{w_{1}}{x},\ldots,\frac{w_{n-1}}{x}\right\}  \leq w_{n}%
\leq\max\left\{  \frac{w_{1}}{x},\ldots,\frac{w_{n-1}}{x}\right\}  .
\]
which follows from (\ref{vectextended}).
\end{proof}

\bigskip

From Theorems \ref{lcompl} and \ref{effCn}, we obtain efficient vectors for
matrices as in (\ref{BJ}), in which $B$ is a block of the form $C_{s}(x).$

\begin{example}
\label{exeff}By Theorem \ref{effCn},
\[
\left[
\begin{array}
[c]{ccccc}%
7 & 3 & 2 & 1 & w_{5}%
\end{array}
\right]  ^{T}\in\mathcal{E}(C_{5}(3))\text{,\quad for any }\frac{1}{3}\leq
w_{5}\leq\frac{7}{3}%
\]
and%
\[
\left[
\begin{array}
[c]{ccccc}%
7 & 3 & 2 & \frac{7}{3} & w_{5}%
\end{array}
\right]  ^{T}\in\mathcal{E}(C_{5}(3)),\text{\quad for any }\frac{2}{3}\leq
w_{5}\leq\frac{7}{3}.
\]
Let $A=A_{8}(C_{5}(3)).$ From Theorem \ref{lcompl}, we have%
\[
\left[
\begin{array}
[c]{cccccccc}%
7 & 3 & 2 & 1 & w_{5} & w_{6} & w_{7} & w_{8}%
\end{array}
\right]  ^{T}\in\mathcal{E}(A)\text{,}%
\]
for%
\[
\frac{1}{3}\leq w_{5}\leq\frac{7}{3}\quad\text{and}\quad\min\{1,w_{5}\}\leq
w_{6},w_{7},w_{8}\leq7.
\]
Also,%
\[
\left[
\begin{array}
[c]{cccccccc}%
7 & 3 & 2 & \frac{7}{3} & w_{5} & w_{6} & w_{7} & w_{8}%
\end{array}
\right]  ^{T}\in\mathcal{E}(A)\text{,}%
\]
for%
\[
\frac{2}{3}\leq w_{5}\leq\frac{7}{3}\quad\text{and}\quad\min\{2,w_{5}\}\leq
w_{6},w_{7},w_{8}\leq7.
\]

\end{example}

\subsection{Efficiency of the Perron eigenvector}

Let $n>s$ and $x>0.$ Let $A=A_{n}(B)\in\mathcal{PC}_{n}$ be as in (\ref{BJ}),
with $B=C_{s}(x)\in\mathcal{PC}_{s}.$ Here we show that the Perron eigenvector
of $A$ is efficient$.$ Note that, since $C_{s}(x)=DA_{s}(C_{s-1}(x))D^{-1},$
in which $D$ is the $s$-by-$s$ positive diagonal matrix
$D=\operatorname*{diag}(x,\ldots,x,1),$ taking into account Lemma \ref{lsim},
it also follows from our result that the Perron eigenvector of $C_{s}(x)$ is efficient.

\begin{theorem}
Let $x>0.$ The Perron eigenvector of the matrix $A$ as in (\ref{BJ}), with
$n>s$ and $B=C_{s}(x),$ is efficient for $A.$
\end{theorem}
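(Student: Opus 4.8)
The plan is to use the structural results already established, especially Lemma \ref{lsubmatrix} and Theorem \ref{blanq}, to reduce the problem to checking strong connectivity of a single small digraph. By Lemma \ref{leigenvstructure} the Perron eigenvector $w$ has its last $n-s$ entries equal, and by Lemma \ref{lsubmatrix} it suffices to show that $w'=w[\{1,\ldots,s+1\}]$ is efficient for $A'=A[\{1,\ldots,s+1\}]=A_{s+1}(C_s(x))$. Thus the entire burden is to prove that the digraph $G(A',w')$ on vertices $\{1,\ldots,s+1\}$ is strongly connected, where the last vertex $s+1$ plays the role of the ``consistent'' coordinate with value $w_{s+1}=\cdots=w_n$.

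First I would write out the eigenvalue equations $(A-\lambda I)w=0$ in a form that exposes the ordering relations among the entries, exactly as was done in the proof of Theorem \ref{thPerron3block} via equations (\ref{e1})--(\ref{e5}). For $B=C_s(x)$ every above-diagonal entry equals $x\ge 1$, so the equations become highly structured: comparing the $i$-th and $j$-th rows of $(A-\lambda I)w$ should yield sign conditions forcing a predictable monotone pattern. I expect to show that within the block $\{1,\ldots,s\}$ the Perron entries are weakly decreasing, $w_1\ge w_2\ge\cdots\ge w_s$ (this is natural since $C_s(x)$ with $x\ge 1$ ranks earlier indices higher), and that each ratio $w_i/w_{i+1}$ is squeezed between $1$ and $x$. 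Concretely, I anticipate deriving $w_{i+1}\le w_i\le x\,w_{i+1}$ for $i=1,\ldots,s-1$, together with bounds locating $w_{s+1}$ relative to the block, such as $w_s\le w_{s+1}\le w_1$ (or more precisely $w_{s+1}$ bounded so the needed edges appear). These are precisely the inequalities that determine which directed edges $i\to j$ (i.e. $w_i/w_j\ge a_{ij}$) are present.

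Once the ordering and the ratio bounds are in hand, I would exhibit an explicit Hamiltonian-type cycle through all $s+1$ vertices, analogous to the cycles $1\to 4\to 3\to 2\to 1$ displayed in the proof of Theorem \ref{thPerron3block}. The monotonicity $w_1\ge\cdots\ge w_s$ gives all the ``descending'' edges $i\to j$ for $i<j$ inside the block (since $w_i/w_j\ge 1$ is weaker than $\ge x$ only when the ratio bound $w_i\le x w_j$ fails, so care is needed here), while the reciprocal bounds $w_i\le x w_{i+1}$ supply the return edges. A clean choice is to verify that the cycle $1\to 2\to\cdots\to s\to (s+1)\to 1$, or a variant threading vertex $s+1$ through the middle, lies entirely in $G(A',w')$; strong connectivity then follows immediately, since a single cycle visiting every vertex makes the digraph strongly connected.

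The main obstacle I foresee is the bookkeeping of the sign/ratio conditions for a block of \emph{arbitrary} size $s$, rather than the fixed size $3$ of Theorem \ref{thPerron3block}: I must argue uniformly that every edge used in the chosen cycle is present, which means confirming both $w_i/w_j\ge x$ for the forward edges and $w_i/w_j\ge 1/x$ for the reverse edges at once, for all $i$. The delicate point is handling the vertex $s+1$, whose comparisons use $a_{i,s+1}=1$ rather than $x$, so the inequality needed there is simply $w_i\ge w_{s+1}$ or $w_{s+1}\ge w_i$; pinning down $w_{s+1}$'s location in the chain from the second block-row equation is the crux. I expect an induction on $s$ (peeling off the structure $C_s(x)=D A_s(C_{s-1}(x))D^{-1}$ already noted before the statement) may simplify this, letting me invoke the class of efficient vectors from Theorem \ref{effCn} to certify that the Perron entries satisfy the leading-principal-submatrix efficiency conditions (\ref{v3by3})--(\ref{vectextended}), after which Theorem \ref{lcompl} finishes the argument.
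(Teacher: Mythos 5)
Your overall framework is the paper's: Lemma \ref{leigenvstructure} and Lemma \ref{lsubmatrix} reduce the problem to strong connectivity of $G(A',w')$ for $A'=A[\{1,\ldots,s+1\}]$, and the paper, like you, extracts inequalities by comparing rows of $(A-\lambda I_n)w=0$ in the style of Theorem \ref{thPerron3block}. Normalizing $w_{s+1}=\cdots=w_n=1$, the inequalities this yields are $w_i\leq xw_{i+1}$ for $i=1,\ldots,s-1$, $w_1\geq 1$ and $w_s\leq 1$. The genuine error in your plan is the orientation of the cycle. Inside the block $a_{i,i+1}=x$, so the edge $i\rightarrow i+1$ requires $w_i/w_{i+1}\geq x$, which is precisely the reverse of the inequality $w_i\leq xw_{i+1}$ that the eigenvector equations force. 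Your cycle $1\rightarrow 2\rightarrow\cdots\rightarrow s\rightarrow(s+1)\rightarrow 1$ would require $w_i=xw_{i+1}$ for all $i$ together with $w_1=w_s=1$, hence $x^{s-1}=1$; so for $x>1$ it is never contained in $G(A',w')$. (Concretely, for $s=2$, $n=3$, $x=4$ the Perron vector is approximately $(1.59,\,0.63,\,1)^T$ and $w_1/w_2\approx 2.5<4$, so the edge $1\rightarrow 2$ is absent.) For the same reason, your claim that the monotonicity $w_1\geq\cdots\geq w_s$ supplies the ``descending'' edges is false: inside the block those edges need ratio at least $x$, not at least $1$. The cycle that does exist --- and is the one the paper exhibits --- is the reverse one, $s+1\rightarrow s\rightarrow\cdots\rightarrow 2\rightarrow 1\rightarrow s+1$: the bounds $w_i\leq xw_{i+1}$ give the edges $i+1\rightarrow i$ (since $a_{i+1,i}=1/x$), and $w_s\leq 1\leq w_1$ gives the two edges through vertex $s+1$. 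So the inequalities you planned to derive do finish the proof, but only when traversed in the direction they actually permit.

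Your fallback route (induction on $s$, certifying conditions (\ref{v3by3})--(\ref{vectextended}) for the block part and then applying Theorem \ref{lcompl}) cannot be repaired either, because Theorem \ref{lcompl} only produces vectors in $\mathcal{E}(A,\{1,\ldots,s\})$, and the Perron eigenvector generally does not lie in that set: its block subvector is not efficient for $C_s(x)$. Already for $s=2$ and any $x>1$ this fails: $C_2(x)$ is consistent, so $w[\{1,2\}]\in\mathcal{E}(C_2(x))$ would force $w_1=xw_2$, whereas comparing rows $1$ and $2$ of $(A-\lambda I_n)w=0$ gives $\lambda(w_1-xw_2)=(n-2)(1-x)<0$, i.e.\ $w_1<xw_2$ strictly. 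The same phenomenon occurs for larger $s$: for $s=3$, $n=5$, $x=4$ the block part of the Perron vector is approximately $(1.90,\,1.07,\,0.60)^T$, which satisfies neither chain of inequalities in Corollary \ref{c3por3}. This is exactly why the paper verifies strong connectivity of $G(A',w')$ directly, via the reversed cycle, instead of passing through Theorem \ref{lcompl}.
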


\begin{proof}
We assume $x\geq1.$ Since, by Lemma \ref{leigenvstructure}, the Perron
eigenvector $w$ of $A$ has the last $n-s$ entries equal, we assume that
$w=\left[
\begin{array}
[c]{cccc}%
y^{T} & 1 & \cdots & 1
\end{array}
\right]  ^{T},$ with $y=\left[
\begin{array}
[c]{ccc}%
w_{1} & \cdots & w_{s}%
\end{array}
\right]  ^{T}.$ By Lemma \ref{lsubmatrix}, it is enough to show that
$w^{\prime}=w[\{1,\ldots,s+1\}]$ is efficient for $A^{\prime}=A[\{1,\ldots
,s+1\}].$ We show that the digraph $G(A^{\prime},w^{\prime})$ contains the
cycle
\[
s+1\rightarrow s\rightarrow\cdots\rightarrow2\rightarrow1\rightarrow s+1.
\]
Then, $G(A^{\prime},w^{\prime})$ is strongly connected implying, by Theorem
\ref{blanq}, that $w^{\prime}$ is efficient for $A^{\prime}.$ The claimed
cycle exists in $G(A^{\prime},w^{\prime})$ if and only if $w_{1}\geq1,$ $1\geq
w_{s}$ and $\frac{w_{i}}{w_{i+1}}\leq x$ for $i=1,\ldots,s-1.$

Let $\lambda$ be the Perron eigenvalue of $A.$ We have $(A-\lambda I_{n})w=0$.
Denote by $r_{i}$ the $i$-th row of $(A-\lambda I_{n})w.$

Let $i\in\{1,\ldots,s-1\}.$ We have $(r_{i}-xr_{i+1})w=0$, which is equivalent
to%
\begin{gather*}
\left(  \frac{1}{x}-1\right)  (w_{1}+\cdots+w_{i-1})-\lambda w_{i}+\lambda
xw_{i+1}+(x-x^{2})(w_{i+2}+\cdots+w_{s})+(1-x)(n-s)=0\\
\Leftrightarrow(1-x)\left[  \frac{1}{x}(w_{1}+\cdots+w_{i-1})+x(w_{i+2}%
+\cdots+w_{s})+n-s\right]  +\lambda(xw_{i+1}-w_{i})=0.
\end{gather*}
Since $1-x\leq0,$ we get $xw_{i+1}-w_{i}\geq0.$

We also have $r_{1}-r_{s+1}=0,$ which is equivalent to%
\[
-\lambda w_{1}+(x-1)(w_{2}+\cdots+w_{s})+\lambda=0\Leftrightarrow
(x-1)(w_{2}+\cdots+w_{s})+\lambda(1-w_{1})=0.
\]
Since $x-1\geq0,$ we get $1-w_{1}\leq0.$

Finally, we have $r_{s}-r_{s+1}=0,$ which is equivalent to%
\[
\left(  \frac{1}{x}-1\right)  (w_{1}+\cdots+w_{s-1})-\lambda w_{s}%
+\lambda=0\Leftrightarrow\left(  \frac{1}{x}-1\right)  (w_{1}+\cdots
+w_{s-1})+\lambda(1-w_{s})=0.
\]
Since $\frac{1}{x}-1\leq0,$ we get $1-w_{s}\geq0.$
\end{proof}

\bigskip

\section{Conclusions\label{s8}$\allowbreak$}

In this paper we focus on reciprocal matrices obtained from consistent ones by
modifying entries in a principal $s$-by-$s$ submatrix, which we call $s$-block
perturbed matrices. We give a class of efficient vectors for such matrices.
Based on it, we provide an explicit description of the complete set of
efficient vectors when $s=3$. This generalizes the previous study of the
efficient vectors for simple and some double perturbed consistent matrices. We
have shown, with an example, that similar results do not hold when $s=4.$ In
addition, sufficient conditions for the efficiency of the Perron eigenvector
of $3$-block perturbed matrices are provided, and examples in which the Perron
eigenvector is not efficient are presented.

Efficient vectors for block perturbed consistent matrices with a constant
pattern perturbation are also given and the efficiency of the Perron
eigenvector is shown.

Several examples illustrating the theoretical results are provided throughout
the paper.

Our results contribute to the study of efficient vectors for reciprocal
matrices, in particular the efficiency of the Perron eigenvector.

\bigskip

\textbf{Declaration} All authors declare that they have no conflicts of interest.

\bigskip

\end{document}